\newcommand{\sign}[1]{\mbox{sign}(#1)}
\newcommand{\sgn}[1]{\lfloor#1\rceil}
\newtheorem{theorem}{Theorem}
\newtheorem{lemma}{Lemma}
\newtheorem{corollary}{Corollary}
\newtheorem{proposition}{Proposition}
\newtheorem{assumption}{Assumption}
\newtheorem{definition}{Definition}
\newtheorem{example}{Example}
\begin{document}

\title{
A methodology for designing fixed-time systems with a predefined upper-bound in their settling time
}

\author{Rodrigo~Aldana-L\'opez*, David~G\'omez-Guti\'errez*, Marco~Tulio~Angulo and  Michael Defoort
\thanks{*Equally contributing authors. Corresponding Author: D.~G\'omez-Guti\'errez, david.gomez.g@ieee.org}
\thanks{R.~Aldana-L\'opez is with the Department of Computer Science and Systems Engineering, University of Zaragoza, Zaragoza, Spain. D.~G\'omez-Guti\'errez is with the Multi-Agent Autonomous Systems Lab, Intel Labs, Intel Tecnología de México, Jalisco, Mexico and with Tecnologico de Monterrey, Escuela de Ingenier\'ia y Ciencias, Jalisco, Mexico. M.~T.~Angulo is with CONACyT - Institute of Mathematics, Universidad Nacional Autonoma de México, Juriquilla, Mexico. M. Defoort is with LAMIH, UMR CNRS 8201, Polytechnic University of Hauts-de-France, Valenciennes, France.}
}







\maketitle

\begin{abstract}
Algorithms having uniform convergence with respect to their initial condition (i.e., with fixed-time stability) are receiving increasing attention for solving control and observer design problems under time constraints. However, we still lack a general methodology to design these algorithms for high-order perturbed systems when we additionally need to impose a user-defined upper-bound on their settling time, especially for systems with perturbations.  Here, we fill this gap by introducing a methodology to redesign a class of asymptotically, finite- and fixed-time stable systems into non-autonomous fixed-time stable systems with a user-defined upper-bound on their settling time. Our methodology redesigns a system by adding time-varying gains. However, contrary to existing methods where the time-varying gains tend to infinity as the origin is reached, we provide sufficient conditions to maintain bounded gains.  We illustrate our methodology by building fixed-time online differentiators with user-defined upper-bound on their settling time and bounded gains.
\end{abstract}


\begin{IEEEkeywords}
Fixed-time stability, prescribed-time, nonlinear observers, online differentiators
\end{IEEEkeywords}

\section{Introduction}

There is an increasing attention in the study of systems having uniform stability with respect to their initial conditions (i.e., with fixed-time stability) for their potential applications in designing algorithms that satisfy time constraints~\cite{Andrieu2008,Polyakov2012,Polyakov2016,Aldana-Lopez2018,Sanchez-Torres2018,Holloway2019,Song2018}. Of particular interest are algorithms that additionally allow the user to set the desired Upper-Bound on their Settling Time (\textit{UBST}). However, the three classes of existing methodologies for designing fixed-time systems are limited.

First, methodologies based on homogeneity often lead to an \textit{UBST} that is unknown or significantly overestimated~\cite{Angulo2013,Basin2016b,Basin2016,Menard2017,Zimenko2018}. Second, methodologies based on a Lyapunov analysis provide the least \textit{UBST} for scalar systems, see e.g.~\cite{Aldana-Lopez2018,aldana2019design}. However, their extension to high-order systems is always very challenging due to the difficulty of finding adequate Lyapunov functions~\cite{Polyakov2012,Aldana-Lopez2018,Sanchez-Torres2018,aldana2019design}. Finally, the third class of methodologies obtain fixed-time system by incorporating time-varying gains, achieving convergence at exactly the desired time~\cite{Song2017,Song2018,Holloway2019}. However, applying this methodology requires that the gains become infinite at the convergence time ~\cite{Song2018} or produce Zeno behavior~\cite{Liu2018}. Both requirements significantly limit the applicability of this methodology.

In this Note, we introduce a methodology based on time-varying gains to redesign a class of perturbed asymptotic, finite-time, or fixed-time stable systems into fixed-time stable systems with an user-defined \textit{UBST}. Crucially, we provide sufficient conditions such that our methodology yields bounded gains. Moreover, contrary to other methodologies such as~\cite{Perruquetti2008,Holloway2019}, our methodology allows for exogenous disturbances.
By allowing disturbances, our methodology allows us to build single-input single-output unknown input observers and online differentiators with fixed-time stability, bounded gains, and desired UBST.
In particular, we apply our methodology to redesign the fixed-time sliding mode differentiator given of~\cite{Cruz-Zavala2011}, whose estimated \textit{UBST} is very conservative. The resulting redesigned non-autonomous fixed-time differentiator has an \textit{UBST} that is significantly less conservative and which can be easily chosen by the user.

The rest of this Note is organized as follows. In Section~\ref{Sec:ProbStatement}, we introduce the problem statement and our main result, providing a methodology based on time-varying gains to obtain a non-autonomous fixed-time stable system with desired \textit{UBST}. In Section~\ref{Sec:Design}, we illustrate our methodology by designing online differentiation algorithms with predefined-time convergence and desired UBST. We end in Section~\ref{Sec:Conclu} presenting some concluding remarks and discussing some limitations of our approach. 
Proofs of the main results are collected in the appendix.

\vspace{0.3cm}

\noindent \textbf{Notation:}
$\mathbb{R}$ is the set of real numbers, $\Bar{\mathbb{R}}=\mathbb{R}\cup\{-\infty,+\infty\}$, $\mathbb{R}_+=\{x\in\mathbb{R}\,:\,x\geq0\}$ and $\Bar{\mathbb{R}}_+=\mathbb{R}_+\cup\{+\infty\}$.
Given $\mathbf{r} = (r_1,\dots,r_n)^T\in\mathbb{R}^n$ and $\varrho = \sum_{i}^nr_i$, we denote the homogeneous norm by $\|x\|_\mathbf{r} = \left(x_1^{\varrho/r_1}+\dots+x_n^{\varrho/r_n}\right)^{1/\varrho}$. For $x\in\mathbb{R}$, $\sgn{x}^\alpha = |x|^\alpha \mbox{sign}(x)$, if $\alpha\neq0$ and $\sgn{x}^\alpha = \mbox{sign}(x)$ if $\alpha=0$. We use $I\in\mathbb{R}^{n\times n}$ for the identity matrix. $C^k(\mathcal{I})$ is the class of functions $f:\mathcal{I}\to\mathbb{R}$ with domain $\mathcal{I}\subseteq\mathbb{R}$  having continuous derivatives up to the $k$-th order on $\mathcal{I}$. $h'(z) = \frac{dh(z)}{dz}$ denotes the first derivative of function $h:\mathbb{R}\to\mathbb{R}$. For functions $\phi,\psi:\mathbb{R}\to\mathbb{R}$, $\phi\circ\psi(t)$ denotes the composition $\phi(\psi(t))$. For a function $\phi:\mathcal{I}\to\mathcal{J}$, its reciprocal $\phi(\tau)^{-1}$, $\tau\in\mathcal{I}$,  is such that $\phi(\tau)^{-1}\phi(\tau)=1$ and its inverse function $\phi^{-1}(t)$, $t\in\mathcal{J}$ is such that $\phi\circ\phi^{-1}(t)=t$.

\section{Problem statement and main results}\label{Sec:ProbStatement}

We start by introducing the notion of fixed-time stability and \textit{UBST}. Then, we introduce the class of systems that our methodology considers and the problem statement. Finally, we present our main result showing how to redesign this class of systems into fixed-time stable systems with desired \textit{UBST}. 

\subsection{Fixed-time stability.}

Consider the system
\begin{equation}\label{eq:sys}
    \dot{x}=f\left(x,t\right)+D\delta(t), \quad \forall t\geq t_0, 
\end{equation}
where $x\in\mathbb{R}^n$ is the state and $t\in[t_0,+\infty)$ is time. Above, $f:\mathbb{R}^n\times\mathbb{R}_+\to\mathbb{R}^n$ is some function that is continuous on $x$ (except, perhaps, at the origin), and continuous almost everywhere on $t$. $D=[0,\ldots,0,1]^T$ and $\delta(t)\in\mathbb{R}$ is a disturbance.
The solutions of~\eqref{eq:sys} are understood in the sense of Filippov~\cite{Cortes2008}. We assume that $f(\cdot,\cdot)$
is such that the origin of~\eqref{eq:sys} is asymptotically stable and, except at the origin, \eqref{eq:sys} has the properties of existence and uniqueness of solutions in forward-time on the interval $[t_0,+\infty)$~\cite[Proposition~5]{Cortes2008}. 
The set of admissible disturbances, on the interval $[t_0,\mathbf{t}]$ with $t_0<\mathbf{t}$ is denoted by $\mathcal{D}_{[t_0,\mathbf{t}]}$.
The solution of \eqref{eq:sys} for $t\in [t_0,\mathbf{t}]$, with disturbance $\delta_{[t_0,\mathbf{t}]}$ (i.e. the restriction of $\delta(t)$ to $[t_0,\mathbf{t}]$) and initial condition $x_0$ is denoted by $x(t;x_0,t_0,\delta_{[t_0,\mathbf{t}]})$, and the initial state is given by $x(t_0;x_0,t_0,\cdot) = x_0$. 

We assume that the origin is the unique equilibrium point of the systems under consideration.  Note that because $f(\cdot, \cdot)$ can be discontinuous at the origin, system \eqref{eq:sys} can have an equilibrium point at the origin despite the presence of disturbances.

For the system in Eq.~\eqref{eq:sys}, its \textit{settling-time function} $T(x_0, t_0)$ for the initial state $x_0 \in \mathbb R^n$ and the initial time $t_0 \geq 0$ is defined as:
\begin{multline}
T(x_0,t_0)=\inf\{\xi\geq t_0:\forall\delta_{[t_0,\infty)}\in\mathcal{D}_{[t_0,\infty)},\\ \lim_{t\to\xi}x(t;x_0,t_0,\delta_{[t_0,\infty)})=0\}-t_0.
\end{multline}
Without ambiguity, when $t_0=0$ we simply write $T(x_0)$.

For simplicity, in the rest of this Note we write ``stable" instead of ``the origin is globally stable". With this shorthand, we can introduce the notion of fixed-time stability.

\begin{definition}\cite{Polyakov2014} \label{def:fixed}
System \eqref{eq:sys} is  \textit{fixed-time stable} if it is asymptotically stable~\cite{Khalil2002} and the settling-time function $T(x_0,t_0)$ is bounded on  $\mathbb{R}^n\times\mathbb{R}_+$, i.e. there exists $T_{{\max}}<+\infty$ such that $$T(x_0,t_0)\leq T_{\max},$$ for all $t_0\in\mathbb{R}_+$ and $x_0\in\mathbb{R}^n$. The quantity $T_{\max}$ is called an Upper Bound of the Settling Time (\textit{UBST}) of the system~\eqref{eq:sys}.
\end{definition}

\subsection{Problem statement}

Our methodology considers the class of systems that can be written in the form
\begin{equation}\label{Eq:System1}
    \dot{x}=G(x_1)+A_0x+D\delta(t),
\end{equation}
Here, $x=[x_1,\ldots,x_n]^T$ is the state, $G:\mathbb{R} \rightarrow \mathbb{R}^n$ is some function, $D =[0,\cdots, 0, 1]^T \in \mathbb R^n$, and  $A_0=[a_{ij}]\in\mathbb{R}^{n\times n}$ is an upper-diagonal matrix with $a_{ij}=1$ if $j=i+1$ and $a_{ij}=0$ are constants. The function  $\delta(t)\in\mathbb{R}$ is a disturbance satisfying $|\delta(t)|\leq L$, for all $t\in[t_0,+\infty)$, with $L \geq 0$ a known constant.
This class of systems in Eq.~\eqref{Eq:System1} appears in the design of single-input single-output observers, single-input single-output unknown-input observers, and online differentiators~\cite{Perruquetti2008,Levant2013,Levant2019}. 

On system~\eqref{Eq:System1}, we make the following assumption:
\begin{assumption}
\label{Assump:Diff}
The function $G(\cdot)$ is such that system~\eqref{Eq:System1} is globally asymptotically stable.
\end{assumption}

The above assumption means that we already have a way to design an asymptotically stable system with the structure~\eqref{Eq:System1}. Of course, this system can also be finite-time stable.
There are several ways to satisfy Assumption~\ref{Assump:Diff}. For example, consider
\begin{equation}
G(x_1)=
\left[
\begin{array}{c}
l_1\sgn{x_1}^{\frac{n-m}{n}}\\
    \vdots\\
l_{n}\sgn{x_1}^{\frac{n-nm}{n}}
\end{array}
\right],
\end{equation}
where $m \geq 0$ and $\{l_i\}_{i=1}^n$ are parameters. For $L = 0$ (i.e., without disturbance) one can simply choose $m = 0$ and gains $\{l_i\}_{i=1}^n$ making the polynomial $s^n-l_1s^{n-1}-\cdots-l_n$ Hurwitz. For $L>0$, one can select $m=1$ and design $\{l_i\}_{i=1}^n$ as when using high-order sliding mode algorithms~\cite{Levant2003,Cruz2018}.

Given a desired \textit{UBST} $T_c>0$ and under Assumption~\ref{Assump:Diff}, our objective is to redesign~\eqref{Eq:System1} into another system
\begin{equation}\label{Eq:PrescDiff} 
\dot{y}=H(y_1,t,T_c)+A_0y+D\delta(t),
\end{equation}
$y = [y_1,\ldots, y_n]^T \in \mathbb{R}^n$, that is fixed-time stable with $T_c$ as an \textit{UBST}. Solving this redesign problem consists in designing a function $H(\cdot, \cdot,\cdot)$ that satisfies the above objective.

\subsection{Main results.}

Given $T_c$, our main result provides a methodology to design function $H(\cdot, \cdot,\cdot)$ starting from $G(\cdot)$ and using two additional functions $\{\rho,F\}$. Here,  $\rho:\mathbb{R}_+\to\mathbb{R}_+$ and $F:\mathbb{R}\to\mathbb{R}^n$.  
The first function is defined as
$$ \rho(\tau) := \frac{1}{T_c}\Phi(\tau)^{-1},$$ where the function $\Phi:\mathbb{R}_+\to\Bar{\mathbb{R}}_+\setminus\{0\}$ satisfies the following assumption:
\begin{assumption}
\label{Assump:NonAut}
The function $\Phi(\cdot)$ satisfies:
\begin{itemize}
    \item $\int_0^{+\infty} \Phi(z)dz = 1$,
    \item $\Phi(\tau)<+\infty$, for all $\tau>0$, 
    \item it is either non-increasing or locally Lipschitz on $\mathbb{R}_+\setminus\{0\}$.
\end{itemize}
\end{assumption}

A simple choice for $\Phi(\cdot)$ would be any probability density function. Note that Assumption \ref{Assump:NonAut} implies that $\lim_{z\to\infty}\Phi(z) = 0$.

The second function $F(\cdot)$ should be chosen to ensure that an ``auxiliary system" 
is asymptotically stable. More precisely, define the auxiliary system
\begin{multline}
\label{Eq:TauSystem}
    \frac{dz}{d\tau}=rF(z_1)+rA_0z-\rho(\tau)^{-1}\frac{d\rho(\tau)}{d\tau}Mz\\+(r\rho(\tau))^{-n}D\hat{\delta}(\tau),
\end{multline}
with initial time $\tau_0=0$ and initial condition $z(\tau_0;z_0,\tau_0,\cdot)= z_0$. Above, $z=[z_1,\ldots, z_n]^T \in \mathbb R^n$, $\tau$ is a ``new" time variable, $r >0$ is a parameter, $M=\mbox{diag}(0,1,\ldots,n-1)$, and $\hat{\delta}(\tau)$ is a new ``disturbance" also satisfying $|\hat{\delta}(\tau)|\leq L$ (the relation between $\delta(t)$ and $\hat{\delta}(\tau)$ will become apparent later). Importantly, note this new disturbance is multiplied by the $\rho(\tau)^{-n}$. Thus, their product vanishes as $\tau \rightarrow \infty$.
The function $F(\cdot)$ should satisfy the following assumption:
\begin{assumption}
\label{Assump:Main}
The functions $\{\rho, F\}$ are chosen such that:
\begin{itemize}
\item[(i)] The origin of the auxiliary system~\eqref{Eq:TauSystem} is asymptotically stable with a settling time function $\mathcal T(z_0)$ satisfying $\mathcal T(z_0) \leq T_{f}\in\Bar{\mathbb{R}}_+$ for a known constant $T_{f}$.
\item[(ii)] The following conditions are satisfied for any disturbance $\hat{\delta}(\tau)$: 
\begin{equation}
\label{Eq:CondTrans}
\lim_{\tau\to\infty}\rho(\tau)^{i-1}z_i(\tau;z_0,0,\hat{\delta}_{[0,\infty)})=0, \quad i = 1, \cdots, n.
 \end{equation}
Here $z_i(\tau;z_0,0,\hat{\delta}_{[0,\infty)})$ is the $i$-th variable of the solution of~\eqref{Eq:TauSystem}.
\end{itemize} 
\end{assumption}

Note that Assumption~\ref{Assump:Main}-(i) does not require that system~\eqref{Eq:TauSystem} is fixed-time stable because it allows $T_{f}=+\infty$. We can also get $T_{f}=+\infty$ when ~\eqref{Eq:TauSystem} is fixed-time stable but there is no known \textit{UBST}. When system~\eqref{Eq:TauSystem} is fixed-time stable with $T_{\max}^*$ as a known \textit{UBST}, we write $T_f=T_{\max}^*$.

In Section~\ref{Subsec:Design} we illustrate how to choose $\{\rho,F\}$ to satisfy Assumptions~\ref{Assump:NonAut} and~\ref{Assump:Main}. In particular, it is shown that $F(\cdot)$ can be linear, or nonlinear with nonlinearities taken from the High-Order Sliding Mode (\textit{HOSM}) differentiator~\cite{Levant2003}.  Note that Assumption~\ref{Assump:Main}-(ii) is immediately satisfied for any function $\rho(\cdot)$ when the auxiliary system is finite- or fixed-time stable (e.g., as in the \textit{HOSM} differentiator). An analogous result can be derived when $F(\cdot)$ is linear (see Lemma~\ref{lemma:exp_stability} in Appendix~\ref{AppendixAuxResults}).

To introduce our main result,  let
\begin{equation}
\label{Eq:NonAutpsi}
\psi(\tau) = T_c \int_0^\tau \Phi(\xi) d \xi.
\end{equation}
Define 
\begin{equation}
\label{Eq:Eta}
\eta:=\lim_{\tau\to T_{f}} T_c^{-1}\psi(\tau)
\end{equation}
and 
$$\kappa(t -t_0) :=
\left\lbrace
\begin{array}{ccc}
    \rho \circ \psi^{-1}(t - t_0) & \text{for} &  t\in[t_0,\eta T_c) \\
    1 & & \text{otherwise.} 
\end{array}
\right.$$ 
Our main result is the following:

\begin{theorem} 
\label{Th:MainResult}
Suppose that:
\begin{itemize}
\item[(i)] The original system~\eqref{Eq:System1} satisfies Assumption~\ref{Assump:Diff} (i.e., it is globally asympotically stable)
\item[(ii)] The functions $\{\rho,F\}$ of the auxiliary system~\eqref{Eq:TauSystem} are chosen to satisfy Assumptions~\ref{Assump:NonAut} and~\ref{Assump:Main}.
\end{itemize}
Then, choosing $H(\cdot,\cdot,\cdot)$ as 
\begin{equation}
    H(y_1,t,T_c)=\left\lbrace
    \begin{array}{cl}
       \Lambda(r\kappa(t-t_0))F(y_1)  & \text{for }  t\in[t_0,t_0+\eta T_c),  \\
       G(y_1)  &  \text{otherwise,}
    \end{array}
    \right.
\end{equation}
where $\Lambda(k)=\mbox{diag}(k,k^2,\ldots,k^n)$, makes the redesigned system~\eqref{Eq:PrescDiff} fixed-time stable with \textit{UBST} given by $\eta T_c$.
\end{theorem}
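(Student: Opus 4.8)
The plan is to prove Theorem~\ref{Th:MainResult} by exhibiting an explicit, invertible change of variables---combining a time reparametrization with a state-dependent scaling---that turns the non-autonomous redesigned system~\eqref{Eq:PrescDiff}, restricted to the convergence window $t\in[t_0,t_0+\eta T_c)$, into the auxiliary system~\eqref{Eq:TauSystem}. The time change is $\tau=\psi^{-1}(t-t_0)$, which by~\eqref{Eq:NonAutpsi} satisfies $d\tau/dt=1/\psi'(\tau)=1/(T_c\Phi(\tau))=\rho(\tau)>0$; since $\Phi>0$, $\psi$ is strictly increasing, so this is a bijection from $[t_0,t_0+\eta T_c)$ onto $[0,T_{f})$, with the endpoint $\tau\to T_{f}$ corresponding to $t\to t_0+\eta T_c$ because $\psi(T_{f})=\eta T_c$ by~\eqref{Eq:Eta}. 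The state change I will use is $y_i(t)=(r\rho(\tau))^{i-1}z_i(\tau)$ for $i=1,\dots,n$ (equivalently $z_i=(r\rho(\tau))^{-(i-1)}y_i$), so that $y_1=z_1$ and the gain matrix $\Lambda(r\kappa(t-t_0))=\Lambda(r\rho(\tau))$ appearing in $H$ is precisely the scaling this transformation produces.

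First I would carry out the differentiation $\dot y_i=(d/dt)[(r\rho(\tau))^{i-1}z_i(\tau)]$ by the chain rule, using $d\tau/dt=\rho$, and substitute the auxiliary dynamics~\eqref{Eq:TauSystem} for $dz_i/d\tau$. The crux is that $(r\rho)^{i-1}z_i$ depends on $\tau$ both through $z_i$ and through the scaling $(r\rho)^{i-1}$; differentiating the scaling produces a spurious term proportional to $(i-1)\rho^{-1}\rho'z_i$, and this is exactly cancelled by the $-\rho(\tau)^{-1}(d\rho/d\tau)Mz$ term in~\eqref{Eq:TauSystem}, since $M=\mathrm{diag}(0,\dots,n-1)$. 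A short computation then shows the $F$-term scales as $(r\rho)^iF_i(y_1)$ (using $y_1=z_1$), the shift term $r z_{i+1}$ scales into $y_{i+1}$, and the $(r\rho)^{-n}$-weighted disturbance in the last equation scales into $\tfrac1r\hat\delta(\tau)$. Matching the last component forces the disturbance correspondence $\hat\delta(\tau)=r\,\delta(t_0+\psi(\tau))$, which maps the admissible disturbance class into itself (up to the scaling by $r$), while the remaining components carry no disturbance and match identically. Thus $y$ solves~\eqref{Eq:PrescDiff} on the window if and only if $z$ solves~\eqref{Eq:TauSystem}. This verification of exact cancellation is the step I expect to be the main obstacle, since it is where the specific structure of the auxiliary system is reverse-engineered.

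With the equivalence in hand, the conclusion follows from Assumption~\ref{Assump:Main}. Given any $y_0\in\mathbb{R}^n$, the initial scaling $z_i(0)=(r\rho(0))^{-(i-1)}y_i(t_0)$ defines a corresponding $z_0$ (here one needs $0<\rho(0)<\infty$, i.e.\ $\Phi(0)$ finite, so that this initial map is a bijection). By Assumption~\ref{Assump:Main}-(i) the auxiliary trajectory satisfies $z(\tau)\to0$ with $\mathcal T(z_0)\le T_{f}$, uniformly in $z_0$; transporting this through the time map yields convergence of $y$ no later than $t=t_0+\psi(T_{f})=t_0+\eta T_c$, uniformly in $y_0$. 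Notably, even when $T_{f}=+\infty$ the map $\psi$ compresses $[0,\infty)$ into $[0,T_c)$, so asymptotic convergence in $\tau$ becomes finite-time convergence in $t$. The delicate point is that as $\tau\to T_{f}$ the scaling $(r\rho(\tau))^{i-1}$ may diverge, so $z_i\to0$ alone does not give $y_i\to0$; this is resolved precisely by Assumption~\ref{Assump:Main}-(ii), whose condition~\eqref{Eq:CondTrans} guarantees $(r\rho)^{i-1}z_i\to0$ and hence $y(t)\to0$ as $t\to t_0+\eta T_c$.

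Finally I would check the remaining global requirements of Definition~\ref{def:fixed}. Uniformity in the initial time $t_0$ is immediate because $H$ depends on $t$ only through $t-t_0$, so the bound $\eta T_c$ holds for every $t_0$. For $t\ge t_0+\eta T_c$ the redesigned system reverts to $H=G$, i.e.\ to the original system~\eqref{Eq:System1}, which by Assumption~\ref{Assump:Diff} is globally asymptotically stable with the origin as its disturbance-invariant equilibrium; since $y$ has already reached the origin, it remains there. Combining global asymptotic stability with the uniform bound $T(y_0,t_0)\le\eta T_c$ gives fixed-time stability with \textit{UBST} $\eta T_c$, as claimed.
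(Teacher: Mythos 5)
Your proposal is correct and takes essentially the same route as the paper's own proof: the scaling $z_i=(r\kappa(t-t_0))^{1-i}y_i$ combined with the reparametrization $t=t_0+\psi(\tau)$ converts the redesigned system on $[t_0,t_0+\eta T_c)$ into the auxiliary system~\eqref{Eq:TauSystem}, after which the settling time transports as $T(y_0,t_0)=\lim_{\tau\to\mathcal{T}(z_0)}\psi(\tau)\leq \eta T_c$ (the paper merely packages the time change and this settling-time formula in Lemma~\ref{Lemma:ParamTransf} instead of doing the chain rule inline). If anything you are more explicit than the paper on two minor points it glosses over: the factor $r$ in the disturbance correspondence $\hat{\delta}(\tau)=r\,\delta(t_0+\psi(\tau))$ (the paper writes $\hat{\delta}(\tau)=\delta(\psi(\tau)+t_0)$, which is exact only for $r=1$) and the explicit use of Assumption~\ref{Assump:Main}-(ii) to recover $y\to 0$ in the original coordinates despite the possibly diverging gains.
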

\begin{proof} See Appendix~\ref{AppendProofMain}. \end{proof}

In the above theorem, the redesign of $G(\cdot)$ that we propose consists in a $H(\cdot, \cdot,\cdot)$ that is piecewise. First, the system starts using function $F(\cdot)$ multiplied by the time-varying gain $\Lambda(r\kappa(t-t_0))$. This choice drives the state of the system to the origin in a time upper bounded by $\eta T_c$. Then, after $\eta T_c$ time units, the function $H(\cdot, \cdot,\cdot)$ simply takes its original form $G(\cdot)$ to maintain the state of the system at the origin despite the disturbance. 

In Theorem~\ref{Th:MainResult}, because the auxiliary system is asymptotically stable with settling time function $\mathcal T(z_0)$, we prove in Appendix~\ref{AppendProofMain} that the settling time function of the redesigned system~\eqref{Eq:PrescDiff} is
$$T(y_0,t_0)=\lim_{\tau\to\mathcal{T}(z_0)}\psi(\tau)=T_c\int_{0}^{\mathcal{T}(z_0)}\Phi(\xi)d\xi.$$
From this expression, we can immediately conclude that $T(y_0, t_0) \leq \eta T_c$ for all $y_0$ and $t_0$ because by construction, $\Phi$ satisfies $\int_0^\infty \Phi(\xi) d\xi =1$. Furthermore, one can obtain a more detailed description of the convergence properties that the redesigned system inherits from the auxiliary system.

\begin{proposition} 
\label{Prop:Main}
Under the conditions of Theorem~\ref{Th:MainResult}, the following holds:
    \begin{enumerate}
    \item if $\mathcal{T}(z_0)=\infty$, for all $z_0\in\mathbb{R}^n\setminus \{0\}$, then the settling time of every nonzero trajectory of the redesigned system~\eqref{Eq:PrescDiff} is precisely $T_c$.
    \item if $\mathcal{T}(z_0)$ is finite but radially unbounded, then $T_c$ is the least \textit{UBST} of~\eqref{Eq:PrescDiff}.
    \item Suppose that there exists $T_{f}<+\infty$ such that for all $z_0 \in \mathbb{R}^n$, $\mathcal{T}(z_0)\leq T_{f}$, (i.e., ~\eqref{Eq:TauSystem} is fixed-time stable). Then, there exists $\hat{T}_c<T_c$ such that $\hat{T}_c$ is an \textit{UBST} function of~\eqref{Eq:PrescDiff}, i.e. for all $y_0\in\mathbb{R}^n\setminus\{0\}$, $T(y_0)<\lim_{\tau\to T_{f}}\psi(\tau)<\hat{T}_c<T_c$. 
    \end{enumerate}
\end{proposition}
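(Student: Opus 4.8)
The plan is to read all three parts directly off the closed-form settling-time expression proved for Theorem~\ref{Th:MainResult},
$$T(y_0,t_0)=\lim_{\tau\to\mathcal{T}(z_0)}\psi(\tau)=T_c\int_{0}^{\mathcal{T}(z_0)}\Phi(\xi)\,d\xi,$$
combined with the bijective correspondence $y_0\leftrightarrow z_0$ between the initial states of the redesigned system~\eqref{Eq:PrescDiff} and the auxiliary system~\eqref{Eq:TauSystem} that the proof of Theorem~\ref{Th:MainResult} supplies. This correspondence is an invertible linear scaling of $\mathbb{R}^n$, so it fixes the origin (hence $y_0\neq0\Leftrightarrow z_0\neq0$) and preserves radial unboundedness ($\|y_0\|\to\infty\Leftrightarrow\|z_0\|\to\infty$). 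It is convenient to set $I(s):=\int_0^s\Phi(\xi)\,d\xi$, which by Assumption~\ref{Assump:NonAut} is non-decreasing, continuous, with $I(0)=0$ and $\lim_{s\to\infty}I(s)=1$, so that $T(y_0,t_0)=T_c\,I(\mathcal{T}(z_0))$.

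For part~(1), substituting $\mathcal{T}(z_0)=\infty$ yields $T(y_0,t_0)=T_c\lim_{s\to\infty}I(s)=T_c$ for every $y_0\neq0$. For part~(2), the bound $T(y_0,t_0)=T_c\,I(\mathcal{T}(z_0))\le T_c$ holds for all $y_0$ because $I\le1$, so $T_c$ is an \textit{UBST}. To show it is the least one, I would use radial unboundedness of $\mathcal{T}(\cdot)$ to choose $z_0^{(k)}$ with $\mathcal{T}(z_0^{(k)})\to\infty$; the matching $y_0^{(k)}$ then give $T(y_0^{(k)},t_0)=T_c\,I(\mathcal{T}(z_0^{(k)}))\to T_c$, since $I$ is continuous with $\lim_{s\to\infty}I(s)=1$. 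Hence $\sup_{y_0}T(y_0,t_0)=T_c$, so no constant below $T_c$ bounds the settling-time function, which is precisely the meaning of ``least \textit{UBST}''.

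For part~(3), from $\mathcal{T}(z_0)\le T_f<\infty$ and monotonicity of $I$ I obtain $T(y_0,t_0)\le T_c\,I(T_f)=\lim_{\tau\to T_f}\psi(\tau)=\eta T_c$. Since $T_f<\infty$ while $\Phi$ still carries positive mass on the tail $(T_f,\infty)$ (this is what makes $\lim_{s\to\infty}I(s)=1$), we have $I(T_f)<1$, i.e. $\eta<1$ and thus $\eta T_c<T_c$; any $\hat{T}_c\in(\eta T_c,T_c)$ is then an \textit{UBST} strictly smaller than $T_c$. The per-trajectory strict inequality $T(y_0)<\eta T_c$ follows from strict monotonicity of $I$ (positivity of $\Phi$) together with $\mathcal{T}(z_0)<T_f$ holding along each individual trajectory, delivering the announced chain $T(y_0)<\lim_{\tau\to T_f}\psi(\tau)<\hat{T}_c<T_c$.

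The only real difficulties sit in part~(3). First, $\eta<1$ is not a consequence of Assumption~\ref{Assump:NonAut} by itself: it requires that $\Phi$ place positive measure beyond $T_f$, which holds for the full-support densities used in Section~\ref{Subsec:Design} but would fail for a $\Phi$ supported inside $[0,T_f]$; I would therefore either invoke this property of the chosen $\Phi$ or state it as a standing hypothesis for this case. Second, turning the weak bound $\mathcal{T}(z_0)\le T_f$ into the strict $T(y_0)<\eta T_c$ needs $\mathcal{T}(z_0)$ to stay strictly below $T_f$ for every finite $z_0$, which I would extract from the fixed-time structure of~\eqref{Eq:TauSystem}. By contrast, parts~(1) and~(2) are immediate once the settling-time formula and the origin-fixing bijection are available, the sole subtlety being the limit in part~(2), which is handled by continuity of $I$ at infinity.
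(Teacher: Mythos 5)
Your proof is correct and takes essentially the same route as the paper: both read all three items directly off the settling-time formula $T(y_0,t_0)=\lim_{\tau\to\mathcal{T}(z_0)}\psi(\tau)=T_c\int_0^{\mathcal{T}(z_0)}\Phi(\xi)\,d\xi$ established in the proof of Theorem~\ref{Th:MainResult}, with part (2) handled by letting $\|z_0\|\to\infty$ and part (3) by monotonicity of the integral. One remark: your first flagged difficulty in part (3) is moot, since the paper defines $\Phi:\mathbb{R}_+\to\Bar{\mathbb{R}}_+\setminus\{0\}$, so $\Phi$ is strictly positive on all of $\mathbb{R}_+$ and $\eta<1$ follows automatically for any finite $T_f$; your second (strictness of $T(y_0)<\eta T_c$) is a genuine subtlety that the paper's own proof sidesteps by silently strengthening the hypothesis to $\sup_{z_0}\mathcal{T}(z_0)<T_f$, and your fix of taking any $\hat{T}_c\in(\eta T_c,T_c)$ — so that $T(y_0)\leq\eta T_c<\hat{T}_c<T_c$ — is actually the cleaner way to secure the existence claim.
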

\begin{proof} See Appendix~\ref{AppendixPropProofs}.
\end{proof}

The above observations are relevant because they allow us to obtain the following two results. First, Corollary~\ref{Cor:TightBound} states that, even if the \textit{UBST} in the auxiliary system~\eqref{Eq:TauSystem} is very conservative, by adequately choosing the function $\rho(\tau)$, our methodology actually yields an \textit{UBST} that is arbitrarily tight. Then, Corollary~\ref{Cor:BoundedGain}, allows characterizing conditions under which our methodology provides bounded gains.

\begin{corollary}
\label{Cor:TightBound}
Let $s_\rho=\eta T_c-\sup_{(x_0,t_0) \in \mathbb{R}^n\times\mathbb{R}_+}T(x_0,t_0)$ be the slack between the least \textit{UBST} and the predefined one given by $\eta T_c$. Then, for any $\varepsilon\in\mathbb{R}_+\setminus\{0\}$ there exists a possible choice of $\rho(\tau)$ such that    $
    s_\rho\leq \varepsilon$.   
\end{corollary}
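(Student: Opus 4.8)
The plan is to turn the slack into an explicit functional of $\Phi$ and then exhibit a family of admissible densities for which it vanishes. First I would combine the settling-time formula $T(y_0,t_0)=T_c\int_0^{\mathcal{T}(z_0)}\Phi(\xi)\,d\xi$ established for Theorem~\ref{Th:MainResult} with $\eta T_c=\lim_{\tau\to T_{f}}\psi(\tau)=T_c\int_0^{T_{f}}\Phi(\xi)\,d\xi$. Since $\Phi\ge 0$, the map $\tau\mapsto\int_0^\tau\Phi$ is non-decreasing and continuous, so $\sup_{y_0,t_0}T(y_0,t_0)=T_c\int_0^{\mathcal{T}^*}\Phi$ with $\mathcal{T}^*:=\sup_{z_0\in\mathbb{R}^n}\mathcal{T}(z_0)\le T_{f}$. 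Hence
\begin{equation}
s_\rho=T_c\int_{\mathcal{T}^*}^{T_{f}}\Phi(\xi)\,d\xi,
\end{equation}
and the corollary reduces to choosing $\Phi$ (equivalently $\rho=\tfrac{1}{T_c}\Phi^{-1}$) so that the tail mass satisfies $\int_{\mathcal{T}^*}^{T_{f}}\Phi\le\varepsilon/T_c$.

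The key structural observation I would exploit is that $\rho$ enters the auxiliary system~\eqref{Eq:TauSystem} only through the terms $-\rho^{-1}\rho'Mz$ and $(r\rho)^{-n}D\hat{\delta}$, and a direct computation gives $-\rho(\tau)^{-1}\frac{d\rho(\tau)}{d\tau}=\frac{\Phi'(\tau)}{\Phi(\tau)}$. Thus if $\Phi$ is chosen \emph{constant} on an initial interval $[0,b]$, the first coupling term vanishes there; and, when $L>0$, the disturbance term $(r\rho)^{-n}D\hat{\delta}=(T_c\Phi/r)^nD\hat{\delta}$ can be made uniformly small by taking $r$ large. On such a plateau the auxiliary dynamics therefore coincide with the autonomous system $\dot z=rF(z_1)+rA_0z$, whose worst-case settling time $\mathcal{T}^*$ is a fixed number determined by $r$ and $F$ and \emph{independent of the shape of $\Phi$}. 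This is precisely what decouples $\mathcal{T}^*$ from the mass distribution of $\Phi$.

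With this decoupling the construction is direct: fix $r,F$ so that the autonomous auxiliary system is fixed- (or finite-) time stable with worst-case settling $\mathcal{T}^*<\infty$; then, for each $\epsilon\in(0,1)$, define $\Phi_\epsilon$ to equal the positive constant $(1-\epsilon)/\mathcal{T}^*$ on $[0,\mathcal{T}^*]$ and a positive non-increasing tail on $[\mathcal{T}^*,\infty)$ carrying the remaining mass $\epsilon$. Such $\Phi_\epsilon$ satisfies Assumption~\ref{Assump:NonAut} (it is positive, finite, non-increasing, and integrates to $1$), and by the plateau property the associated $\mathcal{T}^*$ is unchanged. Then $s_\rho=T_c\int_{\mathcal{T}^*}^{T_{f}}\Phi_\epsilon\le T_c\,\epsilon$, so choosing $\epsilon\le\varepsilon/T_c$ yields $s_\rho\le\varepsilon$.

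The main obstacle is the perturbed case $L>0$, where $\Phi$ re-enters the plateau dynamics through the scaling $(T_c\Phi/r)^n$, so that $\mathcal{T}^*$ depends on the plateau height and ``fix $\mathcal{T}^*$, then build the plateau'' becomes a fixed-point statement rather than a one-way construction. I would close this by exploiting robustness of the chosen $F$ (e.g.\ the HOSM nonlinearity): taking $r$ large drives the effective disturbance bound $(T_c(1-\epsilon)/(r\mathcal{T}^*))^nL$ uniformly to zero, so the worst-case settling time of the perturbed plateau system converges to its nominal value and the coupling admits a fixed point with $\mathcal{T}^*$ arbitrarily close to the undisturbed one. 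Establishing this continuity of the worst-case settling time under a vanishing disturbance bound—together with the admissibility of a tight plateau over $[0,\mathcal{T}^*]$—is the delicate point; in the disturbance-free case $L=0$ the plateau argument is exact and the corollary follows immediately.
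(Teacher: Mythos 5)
Your reduction of the slack to a tail-mass statement is correct and coincides with the paper's own starting point: $s_\rho=T_c\int_{\mathcal{T}^*}^{T_f}\Phi(\xi)\,d\xi$ with $\mathcal{T}^*=\sup_{z_0}\mathcal{T}(z_0)$, and the identity $-\rho^{-1}\rho'=\Phi'/\Phi$ is right. Your plateau construction is, however, a genuinely different route from the paper's. The paper takes $\rho(\tau)=\frac{1}{\alpha T_c}\exp(\alpha\tau)$, so that $\psi(\tau)=T_c(1-\exp(-\alpha\tau))$, and relies on $\rho^{-1}\rho'\equiv\alpha$ being absorbed into the design of $F$ through the similarity transformation of Proposition~\ref{Basin}, which makes $\mathcal{T}(\cdot)$ — hence both $T_f^*=\sup_{z_0}\mathcal{T}(z_0)$ and the known bound $T_{\max}^*$ — independent of $\alpha$; then $s_\rho=T_c(\exp(-\alpha T_f^*)-\exp(-\alpha T_{\max}^*))\leq T_c\exp(-\alpha T_f^*)\to 0$ as $\alpha\to\infty$. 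In other words, the paper does not pin the achieved supremum at $T_c(1-\epsilon)$ and shrink the tail past a fixed $\mathcal{T}^*$; it lets the advertised bound $\eta T_c$ and the achieved supremum slide toward $T_c$ \emph{together}, which requires no decoupling of $\mathcal{T}^*$ from $\Phi$, no fixed point, and no continuity argument in the disturbance bound. For $L=0$ your plateau argument is complete and arguably more transparent (note only that it needs the exact value of $\mathcal{T}^*$ to place the plateau end — admissible in an existence statement, but any conservatism in that placement reintroduces slack of order $T_c(1-\epsilon)(1-\mathcal{T}^*/T_{\max}^*)$, whereas in the paper's family any sufficiently large $\alpha$ works).

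The genuine gap is your large-$r$ fix for $L>0$, whose stated limit is internally inconsistent with your own coupling $h=(1-\epsilon)/\mathcal{T}^*$. On the plateau the dynamics are $\frac{dz}{d\tau}=r\bigl(F(z_1)+A_0z\bigr)+(r\rho)^{-n}D\hat{\delta}$, and rescaling time as $\sigma=r\tau$ shows that $\mathcal{T}^*$ scales like $1/r$ (the nominal $\sigma$-dynamics are $r$-free), so $r\mathcal{T}^*$ remains bounded as $r\to\infty$. Consequently your effective disturbance bound $\bigl(T_c(1-\epsilon)/(r\mathcal{T}^*)\bigr)^nL$ does \emph{not} tend to zero: it tends to a nonzero constant of order $\bigl(T_c(1-\epsilon)/g_0\bigr)^nL$, with $g_0$ the worst-case settling time of $\frac{dz}{d\sigma}=F(z_1)+A_0z$, and for large $T_c$ this exceeds the bound $L$ that the design of $F$ tolerates. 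What actually vanishes (like $1/r$) is the disturbance measured in the rescaled time, $\frac{1}{r}(r\rho)^{-n}L$, compared against the fixed tolerance of the $\sigma$-time system; restated that way your fixed-point argument can in principle be closed, but only by additionally proving the continuity of the worst-case settling time in the disturbance bound at zero — precisely the step you flag and leave unproven. So in the perturbed case (which covers all the paper's differentiator applications) your proof has a genuine hole: the key limit as written is false, and the lemma needed to repair it is missing, while the paper's exponential construction sidesteps both difficulties by co-designing $F$ with $\rho$ so that the auxiliary settling time never depends on the tuning parameter.
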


To obtain the above result, note that if $T_f^*=\sup_{z_0\in \mathbb{R}^n}\mathcal{T}(z_0)$ one can take $\rho(\tau) = \frac{1}{\alpha T_c}\exp(\alpha\tau)$ with $\alpha>0$. Thus, $\eta=1-\exp(-\alpha T_{\max}^*)$ and $\psi(\tau)=T_c(1-\exp(-\alpha \tau))$. Therefore, $\alpha$ exists such that $s_\rho=T_c(\exp(-\alpha T_{f}^*)-\exp(-\alpha T_{\max}^*))\leq\epsilon$. 

\begin{corollary} 
\label{Cor:BoundedGain}
Assume that there exists a known constant $T_{\max}^*<+\infty$ such that $\sup_{z_0 \in \mathbb{R}^n}\mathcal{T}(z_0)\leq T_{\max}^*$. Then, the time-varying gain $\kappa(t-t_0)$ of $H(\cdot,\cdot,\cdot)$ of the redesigned system~\eqref{Eq:PrescDiff}, is bounded as
$$
\kappa(t -t_0) \leq\lim_{t\to \eta T_c}\rho \circ \psi^{-1}(t - t_0)=\lim_{\tau\to T_{\max}^*} \rho(\tau)
<+\infty.$$

\end{corollary}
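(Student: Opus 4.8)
The plan is to reduce the claimed bound to a single monotonicity statement about the transient gain and then to read off finiteness directly from Assumption~\ref{Assump:NonAut}. First I would record the regularity of the building blocks. Under the hypothesis there is a known $T_{\max}^*<+\infty$ with $\sup_{z_0}\mathcal{T}(z_0)\le T_{\max}^*$, so by the convention fixed after~\eqref{Eq:Eta} we take $T_f=T_{\max}^*$. Since $\Phi(\cdot)>0$ on $\mathbb{R}_+\setminus\{0\}$ and $\psi(\tau)=T_c\int_0^\tau\Phi(\xi)d\xi$, the map $\psi$ is continuous and strictly increasing, hence a bijection of $[0,T_{\max}^*]$ onto $[0,\psi(T_{\max}^*)]$ with continuous inverse $\psi^{-1}$. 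Because $T_f<+\infty$, we have $\eta T_c=\lim_{\tau\to T_{\max}^*}\psi(\tau)=\psi(T_{\max}^*)$, so $\psi^{-1}(\eta T_c)=T_{\max}^*$, and for every $t\in[t_0,t_0+\eta T_c)$ the argument $\tau:=\psi^{-1}(t-t_0)$ ranges exactly over $[0,T_{\max}^*)$.

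Next I would exploit the identity $\rho(\tau)=1/(T_c\Phi(\tau))$. On the non-increasing branch of Assumption~\ref{Assump:NonAut}, $\Phi$ positive and non-increasing makes $\rho$ non-decreasing; since $\psi^{-1}$ is increasing, the composition $\kappa(t-t_0)=\rho\circ\psi^{-1}(t-t_0)$ is non-decreasing on $[t_0,t_0+\eta T_c)$. Consequently its supremum over the transient phase is attained in the limit at the right endpoint, which by the previous paragraph equals
$$\lim_{t\to t_0+\eta T_c}\rho\circ\psi^{-1}(t-t_0)=\lim_{\tau\to T_{\max}^*}\rho(\tau),$$
establishing both the inequality and the middle equality of the statement.

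Finiteness of this limit is then immediate. On $[0,T_{\max}^*)$ we have $\rho(\tau)\le\lim_{\tau\to T_{\max}^*}\rho(\tau)=1/(T_cL_\Phi)$, where $L_\Phi:=\lim_{\tau\to T_{\max}^{*-}}\Phi(\tau)$. Because $\Phi$ takes values in $\bar{\mathbb{R}}_+\setminus\{0\}$ and is finite for every $\tau>0$, and because $T_{\max}^*>0$, the left limit $L_\Phi$ is strictly positive and finite, so $\rho$ stays bounded and $\lim_{\tau\to T_{\max}^*}\rho(\tau)<+\infty$, giving the strict inequality. For $t\ge t_0+\eta T_c$ the gain reduces to the constant $\kappa\equiv 1$, which is trivially bounded, so the bound on $\kappa(\cdot)$ holds for all $t$.

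The step I expect to be the main obstacle is justifying that the supremum of the transient gain is genuinely attained in the limit at the endpoint rather than in the interior, i.e. the monotonicity of $\rho\circ\psi^{-1}$. This is clean on the non-increasing branch of Assumption~\ref{Assump:NonAut}, but on the merely locally Lipschitz branch $\rho$ need not be monotone, so the endpoint value need not be the supremum. There I would instead settle for boundedness rather than the exact endpoint bound: using that $\Phi$ is continuous and strictly positive, hence bounded below by a positive constant on each compact subinterval $[\delta,T_{\max}^*]$, while $\rho(\tau)\to 1/(T_c\Phi(0))$ (possibly $0$) stays finite as $\tau\to 0^+$, one still obtains $\sup_{\tau\in[0,T_{\max}^*)}\rho(\tau)<+\infty$, which is the property of interest for guaranteeing bounded gains.
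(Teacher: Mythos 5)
Your main argument is correct and is, in substance, the paper's own: the paper states Corollary~\ref{Cor:BoundedGain} without a separate proof precisely because it reads it off the time-scale substitution $t-t_0=\psi(\tau)$ from the proof of Theorem~\ref{Th:MainResult} --- taking $T_f=T_{\max}^*<+\infty$ gives $\eta T_c=\psi(T_{\max}^*)$, the transient interval $[t_0,t_0+\eta T_c)$ corresponds bijectively to $\tau\in[0,T_{\max}^*)$, and on that interval $\kappa(t-t_0)=\rho(\tau)$ with $\rho=(T_c\Phi)^{-1}$ finite at finite arguments. Your first two paragraphs make this explicit, and your monotonicity step ($\Phi$ non-increasing, hence $\rho\circ\psi^{-1}$ non-decreasing, so the supremum of the transient gain is the endpoint limit) supplies exactly the justification the paper leaves implicit for writing the bound as $\lim_{\tau\to T_{\max}^*}\rho(\tau)$. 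Your further observation that on the merely locally Lipschitz branch of Assumption~\ref{Assump:NonAut} the endpoint limit need not dominate interior values is also accurate, and it identifies a real imprecision in the corollary as stated; note that every example gain in the paper ($\rho(\tau)=T_c^{-1}e^{\tau}$, $\rho(\tau)=\frac{\pi}{2T_c}(\tau^2+1)$, $\rho(\tau)=\frac{1}{\alpha T_c}e^{\alpha\tau}$) is increasing, which is the regime the corollary is really written for.

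The one genuine flaw is in your fallback for the locally Lipschitz branch. To conclude $\sup_{\tau\in[0,T_{\max}^*)}\rho(\tau)<+\infty$ you assert $\rho(\tau)\to 1/(T_c\Phi(0))$ as $\tau\to 0^+$, i.e.\ right-continuity of $\Phi$ at $0$; Assumption~\ref{Assump:NonAut} does not provide this --- it constrains $\Phi$ only on $\mathbb{R}_+\setminus\{0\}$ and merely forbids the value $0$ pointwise. Take $\Phi(\tau)=\tau e^{-\tau}$ for $\tau>0$ with $\Phi(0):=1$: every item of the assumption holds ($\int_0^{\infty}\Phi(\xi)d\xi=1$, $\Phi$ finite and smooth on $(0,\infty)$), yet $\rho(\tau)=e^{\tau}/(T_c\,\tau)\to+\infty$ as $\tau\to 0^+$, so the gain blows up immediately after $t_0$ and even your weakened conclusion of bounded gains fails. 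Hence the fallback does not close that branch. What your argument actually proves --- and what is true --- is that the corollary holds whenever $\rho$ is non-decreasing (equivalently, $\Phi$ non-increasing), and that bounded gains persist more generally whenever $\inf_{\tau\in(0,T_{\max}^*]}\Phi(\tau)>0$, in which case the displayed bound should be read as $\sup_{\tau\in[0,T_{\max}^*)}\rho(\tau)$ rather than the endpoint limit.
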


The above result shows that if  $\{\rho, F\}$ are chosen to ensure that the auxiliary system~\eqref{Eq:TauSystem} is fixed-time stable with a known \textit{UBST}, then the redesigned system can be fixed-time stable with a predefined \textit{UBST} and with bounded gains.

\subsection{Examples: designing fixed-time stable systems with desired \textit{UBST}.}
\label{Subsec:Design}

Here, we apply our methodology illustrating three possible choices for the functions $\{F(\cdot), \rho(\cdot)\}$.
We start with the simplest case when the function $F(\cdot)$ is linear. This case is associated to item 1 of Proposition~\ref{Prop:Main}.
\begin{proposition}
\label{Prop:Lineal}
Let 
\begin{equation}
    F(z_1)=\left[\begin{array}{c}
         k_1 z_1  \\
         \vdots \\
         k_n z_1
    \end{array} 
    \right] ,
\end{equation} 
and choose $K=[k_1\ \cdots\ k_n]^T$ such that $A=A_0+KC$ is Hurwitz, with $C=[1\ 0\ \cdots\ 0]$. Then, the redesigned system~\eqref{Eq:PrescDiff} is fixed-time stable with $T_c$ as an \textit{UBST} if  $\rho: \mathbb{R}_+ \rightarrow \mathbb{R}_+$ and $r$ satisfy at least one of the following conditions:
\begin{enumerate}
    \item $\rho(\tau)^{-1}\frac{d\rho(\tau)}{d\tau}=1$, and $r>2\lambda_{\max}(P)(n-1)$ where $P$ is the solution of $PA+A^TP=-I$,
    \item $\rho(\tau)^{-1}\frac{d\rho(\tau)}{d\tau}\to 0$ as $\tau\to\infty$, and $r>0$.
\end{enumerate}
\end{proposition}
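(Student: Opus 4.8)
The plan is to verify that, for $F$ linear and $\rho$ satisfying either condition, the pair $\{\rho,F\}$ meets Assumptions~\ref{Assump:NonAut} and~\ref{Assump:Main}, so that Theorem~\ref{Th:MainResult} applies. The first reduction is purely algebraic: since $F(z_1)=Kz_1$ and $z_1=Cz$, the terms $rF(z_1)+rA_0z$ collapse to $r(A_0+KC)z=rAz$, so the auxiliary system~\eqref{Eq:TauSystem} becomes the linear time-varying system $dz/d\tau=(rA-g(\tau)M)z+(r\rho(\tau))^{-n}D\hat\delta(\tau)$, where I abbreviate $g(\tau):=\rho(\tau)^{-1}\,d\rho(\tau)/d\tau$ and $A$ is Hurwitz by hypothesis. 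Because a linear asymptotically stable system converges only as $\tau\to\infty$, its settling time satisfies $\mathcal T(z_0)=\infty$, so $T_{f}=+\infty$ and $\eta=\int_0^\infty\Phi=1$; hence Theorem~\ref{Th:MainResult} will yield an \textit{UBST} of exactly $\eta T_c=T_c$, matching the claim, and the real work is establishing Assumption~\ref{Assump:Main}.

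I would treat the unperturbed system ($\hat\delta\equiv0$) first and reinstate the disturbance at the end, since $(r\rho)^{-n}D\hat\delta$ is bounded and vanishes as $\rho\to\infty$. For Assumption~\ref{Assump:Main}-(i) in case~1, where $g\equiv1$, the system is \emph{time-invariant}, $dz/d\tau=(rA-M)z$, and I would use $V=z^TPz$ with $PA+A^TP=-I$. A direct computation gives $\dot V=z^T\!\left(-rI-(MP+PM)\right)z$, and since $\|MP+PM\|\le 2\|M\|\lambda_{\max}(P)=2(n-1)\lambda_{\max}(P)$, the condition $r>2(n-1)\lambda_{\max}(P)$ makes $\dot V$ negative definite, giving (exponential) asymptotic stability. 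In case~2, where $g(\tau)\to0$, the matrix $rA$ is Hurwitz for every $r>0$ and $g(\tau)M$ is a vanishing perturbation, so asymptotic stability follows from a standard vanishing-perturbation argument (and from Lemma~\ref{lemma:exp_stability}).

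The crux is Assumption~\ref{Assump:Main}-(ii), namely $\rho^{i-1}z_i\to0$, because $\rho^{i-1}$ \emph{grows} while $z_i\to0$. In case~1, $g\equiv1$ forces $\rho(\tau)=\rho(0)e^{\tau}$, so $\rho^{i-1}$ grows like $e^{(i-1)\tau}$ and it suffices to show that $z(\tau)$ decays faster than $e^{-(n-1)\tau}$, i.e. that $rA-M+(n-1)I$ is Hurwitz. The key trick is to rewrite $M=\tilde M+(n-1)I$ with $\tilde M=\mathrm{diag}(-(n-1),\dots,0)\preceq0$: testing $rA-M+(n-1)I$ against the \emph{same} $P$ gives $P\big(rA-M+(n-1)I\big)+(\cdot)^T=-rI-(\tilde M P+P\tilde M)$, and because $\|\tilde M\|=n-1$ is unchanged by the shift, the very same bound $r>2(n-1)\lambda_{\max}(P)$ yields $rI+\tilde M P+P\tilde M\succ0$. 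Equivalently, $W:=e^{2(n-1)\tau}V$ satisfies $\dot W\le-\mu W$, so $e^{(n-1)\tau}\|z\|\to0$ and hence $\rho^{i-1}|z_i|\le\rho(0)^{i-1}e^{(n-1)\tau}\|z\|\to0$ for all $i\le n$. In case~2, $g\to0$ gives $\ln\rho(\tau)=o(\tau)$, so $\rho$ grows sub-exponentially and is dominated by the exponential decay of $z$ guaranteed by $rA$ Hurwitz; thus $\rho^{i-1}z_i\to0$ for any $r>0$, which explains why no lower bound on $r$ is needed there.

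Finally, I would reinstate the disturbance. Since its coefficient $(r\rho)^{-n}$ decays like $e^{-n\tau}$ in case~1 (and stays bounded in case~2), feeding it into the weighted estimate, one finds $\dot W\le-\mu W+c\,e^{-\tau}\sqrt W$; comparing via $d(\sqrt W)/d\tau$ then shows $W\to0$ for every admissible $\hat\delta$, so both parts of Assumption~\ref{Assump:Main} survive, the detailed time-varying computation being exactly what Lemma~\ref{lemma:exp_stability} supplies. With Assumption~\ref{Assump:NonAut} (which holds because $\Phi=1/(T_c\rho)$ is nonincreasing with $\int_0^\infty\Phi=1$ once $\int_0^\infty\rho^{-1}=T_c$ is normalized) and Assumption~\ref{Assump:Main} in hand, Theorem~\ref{Th:MainResult} gives fixed-time stability with \textit{UBST} $\eta T_c=T_c$. \textbf{The main obstacle} is the rate-matching in Assumption~\ref{Assump:Main}-(ii): obtaining the sharp threshold $2(n-1)\lambda_{\max}(P)$ rather than a factor-two-larger one hinges on the shift $M\mapsto\tilde M$, and controlling the cross term produced by the vanishing disturbance requires the careful weighted Lyapunov argument rather than a naive norm bound.
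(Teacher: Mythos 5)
Your proposal is correct, and its core coincides with the paper's: both collapse $rF(z_1)+rA_0z$ to $rAz$, both take the same quadratic Lyapunov function $V=z^TPz$ with $PA+A^TP=-I$, and both use $\|Mz\|\le(n-1)\|z\|$ to arrive at the identical threshold $r>2(n-1)\lambda_{\max}(P)$ (holding for all $\tau$ in case 1, and for $\tau\ge\tau^*$ in case 2). Where you genuinely diverge is in how the two halves of Assumption~\ref{Assump:Main} are dispatched. The paper runs a single ISS-style estimate, $\frac{dV}{d\tau}\le -\|z\|\left(\left(r-2\rho^{-1}\frac{d\rho}{d\tau}\lambda_{\max}(P)(n-1)\right)\|z\|-\frac{2L\lambda_{\max}(P)}{r^n\rho^n}\right)$, concludes that trajectories enter a ball of radius $\alpha_1(\tau)\to 0$ (treating the disturbance and both conditions on $\rho$ in one stroke), and stops at asymptotic stability; condition (ii) of Assumption~\ref{Assump:Main} is addressed only indirectly, through the main-text remark deferring the linear case to Lemma~\ref{lemma:exp_stability}. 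You instead verify (ii) explicitly: the shift $M=\tilde M+(n-1)I$ with $\|\tilde M\|=n-1$ shows that $rA-M+(n-1)I$ satisfies the same $P$-Lyapunov inequality under the same bound on $r$, i.e. the weighted function $W=e^{2(n-1)\tau}V$ decays --- which is exactly the decay-rate hypothesis $c>(n-1)\log(\rho(\tau))/\tau$ of Lemma~\ref{lemma:exp_stability} for $\rho(\tau)=e^{\tau}/T_c$, a hypothesis the published proof never actually checks in case 1. Likewise, your comparison estimate $\frac{d}{d\tau}\sqrt{W}\le-\frac{\mu}{2}\sqrt{W}+\frac{c}{2}e^{-\tau}$ carries the disturbance through condition (ii), whereas the paper's shrinking-ball argument only yields convergence of $z$ itself, not of $\rho^{i-1}z_i$. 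So your route costs more computation but is self-contained and patches the step the paper leaves implicit, while the paper's single-estimate route is more economical at the price of leaning on Lemma~\ref{lemma:exp_stability} without verifying its rate condition. One cosmetic point of agreement: your observation that $\mathcal{T}(z_0)=+\infty$ forces $\eta=1$ places the result under item 1 of Proposition~\ref{Prop:Main} (settling time exactly $T_c$ for nonzero trajectories), which is also how the paper's proof closes.
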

\begin{proof}
See Appendix~\ref{AppendixPropProofs}.
\end{proof}

Examples of $\rho(\cdot)$ satisfying Condition~1 and Condition~2 are $\rho(\tau) = \frac{1}{T_c}\exp(\tau)$ and $\rho(\tau) = \frac{\pi}{2T_c}(\tau^2 + 1)$, respectively.

In the particular case when $L = 0$ (i.e., $\delta(t)\equiv0$), Proposition~\ref{Prop:Lineal} is similar to the main result in~\cite{Holloway2019}. However, our time-varying gains are simpler and allow using different classes of time-varying gains $\kappa(t-t_0)$, other than the time-base generators proposed in~\cite{Morasso1997}. Moreover, unlike~\cite{Holloway2019} that was limited to the case with $L=0$, our result with $L>0$ allows the application to unknown input observers and exact differentiators with predefined-time convergence (i.e. with $\delta(t)\not\equiv0$).

Note that, since $F(\cdot)$ is linear, system~\eqref{Eq:TauSystem} is asymptotically stable, $T_f=+\infty$ and $\eta=1$. Thus, similar as in~\cite{Holloway2019}, the time-varying gain is unbounded: $\lim_{t\to t_0+T_c}\kappa(t-t_0)=+\infty$. 

Next, we show that our methodology yields a system~\eqref{Eq:PrescDiff} that is fixed-time stable with least \textit{UBST} given by $T_c$. For this aim, consider the case when $F(\cdot)$ is such that system~\eqref{Eq:TauSystem} is finite-time stable. This case is associated to item 2 of Proposition~\ref{Prop:Main}. In this case, for any finite initial condition, the following result guarantees that the origin is reached before the singularity in $\kappa(t-t_0)$ appears.

\begin{proposition}
\label{Prop:STFixed}
Let 
\begin{equation}
    F(z_1)=\left[\begin{array}{c}
         l_1 \sgn{z_1}^{\frac{n-1}{n}}  \\
         l_2 \sgn{z_1}^{\frac{n-2}{n}} \\
         \vdots \\
         l_n \sgn{z_1}^{0}
    \end{array} 
    \right].
\end{equation} 
with $\{l_i\}_{i=1}^n$ chosen as in~\cite{Levant2003,Levant2019}. Then, the redesigned system~\eqref{Eq:PrescDiff} is fixed-time stable with $T_c$ as its least \textit{UBST} if $r = 1$ and the function $\rho$ is $C^n([0,\infty))$ and satisfies $$\rho(\tau)^{-1}\frac{d\rho(\tau)}{d\tau}\to 0 \mbox{ as } \tau\to\infty.$$
\end{proposition}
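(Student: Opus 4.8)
The plan is to verify that the choice $\{F,\rho\}$ in the statement satisfies the hypotheses of Theorem~\ref{Th:MainResult}, and then to strengthen the resulting conclusion from ``UBST $T_c$'' to ``least UBST $T_c$'' through item~2 of Proposition~\ref{Prop:Main}. First I would check Assumption~\ref{Assump:NonAut}: since $\Phi=(T_c\rho)^{-1}$ with $\rho$ positive and $C^n([0,\infty))$, the density $\Phi$ is finite and (locally) Lipschitz on $\mathbb{R}_+\setminus\{0\}$, so it only remains to confirm the normalization $\int_0^\infty\Phi=1$, which fixes the class of admissible $\rho$.

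The core of the argument is Assumption~\ref{Assump:Main}-(i): showing that the auxiliary system~\eqref{Eq:TauSystem} with $r=1$ is finite-time stable. I would begin by observing that its nominal part $\der{z}{\tau}=F(z_1)+A_0z$ is exactly the HOSM differentiator error dynamics, which for the gains $\{l_i\}$ of~\cite{Levant2003,Levant2019} is finite-time stable and homogeneous of degree $-1$ with weights $\mathbf{r}=(n,n-1,\dots,1)$. The two remaining terms of~\eqref{Eq:TauSystem} are perturbations: the term $-\rho^{-1}\der{\rho}{\tau}Mz$ is homogeneous of degree $0$ -- one order higher than the nominal field -- and carries the coefficient $\rho^{-1}\der{\rho}{\tau}$, which tends to $0$; the term $\rho^{-n}D\hat{\delta}$ is a bounded disturbance in the last channel of magnitude $\le\rho^{-n}L$, also vanishing as $\tau\to\infty$. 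Taking a (generally non-smooth) homogeneous strict Lyapunov function $V$ for the nominal system, which exists by a converse homogeneous Lyapunov theorem, I would show $\dot V$ stays negative definite along~\eqref{Eq:TauSystem}: the degree-$0$ perturbation enters $\dot V$ as a term bounded by $\rho^{-1}\der{\rho}{\tau}\,c\,V$, which is dominated by the nominal finite-time term $-c_1V^{(\varrho-1)/\varrho}$ near the origin and, thanks to $\rho^{-1}\der{\rho}{\tau}\to0$, globally after a finite time; the disturbance is rejected by the discontinuous $l_n\sgn{z_1}^0$ channel provided the gains dominate the (finite) effective bound $\sup_\tau\rho^{-n}L$, which coincides with the standard Levant requirement whenever $\rho\ge1$. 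This yields $\mathcal T(z_0)<\infty$ for every $z_0$.

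I expect this Lyapunov step to be the main obstacle, because~\eqref{Eq:TauSystem} is simultaneously non-autonomous (through $\rho^{-1}\der{\rho}{\tau}$) and discontinuous (through the sliding-mode channel), so the degree-$0$ perturbation and the disturbance rejection must be balanced inside a single homogeneous estimate, and it is the \emph{vanishing} of $\rho^{-1}\der{\rho}{\tau}$, not merely its boundedness, that upgrades local to global finite-time convergence. Once finite-time stability is established, Assumption~\ref{Assump:Main}-(ii) is immediate, since $z_i(\tau)\equiv0$ for $\tau\ge\mathcal T(z_0)$ forces $\lim_{\tau\to\infty}\rho^{i-1}z_i=0$. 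Applying Theorem~\ref{Th:MainResult} with $T_{f}=+\infty$ then gives $\eta=\lim_{\tau\to\infty}T_c^{-1}\psi(\tau)=1$, hence the UBST equals $\eta T_c=T_c$.

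Finally, to obtain that $T_c$ is the \emph{least} UBST I would verify the radial unboundedness required by item~2 of Proposition~\ref{Prop:Main}. Using the homogeneous norm $\|z\|_{\mathbf{r}}$, whose derivative along the nominal (degree-$-1$) field is homogeneous of degree $0$ and hence bounded, and bounding the two perturbation contributions by $B\|z\|_{\mathbf{r}}$ and a constant respectively (both coefficients being bounded on $[0,\infty)$), I would obtain the differential inequality $\big|\tfrac{d}{d\tau}\|z\|_{\mathbf{r}}\big|\le C+B\|z\|_{\mathbf{r}}$. Integrating yields $\mathcal T(z_0)\ge B^{-1}\ln\!\big(1+B\|z_0\|_{\mathbf{r}}/C\big)\to\infty$ as $\|z_0\|\to\infty$, so $\mathcal T$ is finite but radially unbounded, and item~2 of Proposition~\ref{Prop:Main} identifies $T_c$ as the least UBST of the redesigned system~\eqref{Eq:PrescDiff}.
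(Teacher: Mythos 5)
Your proposal is correct in outline, but in the decisive step it takes a genuinely different route from the paper's proof. Both arguments share the first phase: a converse homogeneous Lyapunov function for the degree $d=-1$ field $\mathcal F(z)=F(z_1)+A_0z$ (Theorem~\ref{Th:existenceLyapunov}) gives $\frac{dV}{d\tau}<0$ along \eqref{Eq:TauSystem} outside a region that shrinks to the origin because $\rho(\tau)^{-1}\frac{d\rho(\tau)}{d\tau}\to0$ and $\rho(\tau)^{-n}\to0$, hence asymptotic stability. For the crucial upgrade to \emph{finite-time} stability, however, the paper abandons the Lyapunov estimate and instead performs the explicit change of coordinates $w_1=z_1$, $w_2=z_2$, $w_i=z_i-\sum_{j=1}^{i-2}\frac{d^{j-1}}{d\tau^{j-1}}\left(\rho(\tau)^{-1}\frac{d\rho(\tau)}{d\tau}z_{i-j}\right)(i-j-1)$, which converts \eqref{Eq:TauSystem} into Levant's differentiator dynamics perturbed only in the last channel by a term $Q(w,\tau)$ satisfying $|Q|\leq L$ after a finite $\tau^*$, and then cites the finite-time stability of the perturbed differentiator from~\cite{Levant2001}; this coordinate change is precisely why the hypothesis $\rho\in C^n([0,\infty))$ is imposed, since it differentiates $\rho^{-1}\frac{d\rho}{d\tau}$ repeatedly. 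You instead conclude finite-time convergence from a single homogeneous Lyapunov inequality, absorbing the matched disturbance into the discontinuous channel — a route that never uses $C^n$ smoothness. To make your version airtight you should apply the converse theorem not to the nominal system but to the differential inclusion $\frac{dz}{d\tau}\in F(z_1)+A_0z+D[-L,L]$ (Theorem~\ref{Th:existenceLyapunov} is stated for inclusions, and this one is still $\mathbf r$-homogeneous of degree $-1$ since the last channel has weight $1$); then $\rho^{-n}\hat\delta$ lies inside the inclusion as soon as $\rho\geq1$, which your sketch should defer to large $\tau$ — it need not hold initially since $\rho(0)=\frac{1}{T_c}\Phi(0)^{-1}$ can be below one, but $\rho\to\infty$ guarantees it after finite time — leaving only the vanishing degree-zero term $\rho^{-1}\frac{d\rho}{d\tau}Mz$, which is dominated by $-c_3V^p$ on the compact set the trajectory has already entered by phase one (your claim of ``global'' domination at fixed $\tau$ is false for large $\|z\|$, so boundedness of the trajectory must be invoked first). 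Finally, one point genuinely in your favor: the ``least UBST'' conclusion rests on item 2 of Proposition~\ref{Prop:Main}, which requires $\mathcal T(z_0)$ to be radially unbounded, a fact the paper's proof invokes without verification; your Gr\"onwall-type lower bound $\mathcal T(z_0)\geq B^{-1}\ln\left(1+B\|z_0\|_{\mathbf r}/C\right)$ supplies exactly this missing step and is sound, since $\rho^{-1}\frac{d\rho}{d\tau}$ and $\rho^{-n}$ are continuous and vanish at infinity, hence bounded on $[0,\infty)$.
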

\begin{proof}
See Appendix~\ref{AppendixPropProofs}.
\end{proof}

Proposition~\ref{Prop:STFixed} allows us to design fixed-time exact differentiators based on Levant's high-order sliding mode differentiator~\cite{Levant2003}. Here, one simple choice is again $\rho(\tau) = \frac{\pi}{2T_c}(\tau^2 + 1)$.
In~\cite{Levant2014}, an arbitrary order differentiator using time-varying gains was proposed. Compared to~\cite{Levant2014}, our approach guarantees a fixed-time convergence with predefined \textit{UBST}.

Finally, we consider the case when the auxiliary system takes the form of~\eqref{Eq:System1}. To this end, let us consider the following linear system:
\begin{equation}
\label{eq_syst_last}
    \frac{dz}{d\tau}=\Gamma_z z, \quad y_z = C z,
\end{equation}
where $y_z$ is the output, and
$\Gamma_z=[\gamma_{ij}] \in \mathbb R^{n \times n}$ is such that
$$\gamma_{ij}=\left\lbrace 
\begin{array}{cl}
1 & \text{if }  i=j+1  \\
-\alpha(i-1) & \text{if } i=j\\ 
0 & \text{otherwise}
\end{array}
\right.$$
$\alpha>0$ and $C=\left[
    \begin{array}{ccccc}
        1 & 0 & \cdots & 0 & 0
    \end{array}
    \right]$.
The characteristic polynomial of $\Gamma_z$ is $s^n+a_1s^{n-1}+\cdots+a_n$. The similarity transformation $\mathcal{Q}\in \mathbb R^{n \times n}$ (i.e. $\hat{z}=\mathcal{Q}z$) that transforms system \eqref{eq_syst_last} into its observer canonical form~\cite{Kailath1980} is denoted by:
\begin{equation}
\mathcal{Q}:=(\mathcal{V}\mathcal{O}(\Gamma,C))^{-1}.
\label{Eq:SimTrans}
\end{equation}
Here
$\mathcal{O}(\Gamma,C)$ is the observability matrix of the pair $(\Gamma,C)$ and $\mathcal{V}:=[v_{ij}]$ is given by
$$
v_{ij}=\left\lbrace
\begin{array}{cl}
    1 &  \text{if }  j=i \\
    a_{i-j} & \text{if } j<i\\
    0     &  \text{otherwise}.
\end{array}
\right.
$$

The observer canonical form is given by the following dynamics
\begin{align}
    \frac{d\hat{z}_i}{d\tau}&=-a_i\hat{z}_1+\hat{z}_{i+1} \quad i=1,\ldots,n-1,\\
    \frac{d\hat{z}_n}{d\tau}&=-a_n\hat{z}_1.
\end{align}

Notice that $\mathcal{Q}^{-1}D=D$.

\begin{proposition}
\label{Basin}
Choose $\rho(\tau)$ such that $\rho(\tau)^{-1}\frac{d\rho(\tau)}{d\tau}=\alpha>0$ and set $r=1$. Let $k_i\in \mathbb{R}$ and $g_i:\mathbb{R} \to \mathbb{R}$, $i=1,\ldots,n$, be such that
\begin{align*}
\frac{dz_i}{d\tau}&=k_ig_i(z_1)+z_i ,\quad i=1, \cdots, n-1,\\
\frac{dz_n}{d\tau}&=k_ng_n(z_1)+\hat{\delta}(\tau),
\end{align*}
is asymptotically stable with settling time function $\mathcal{T}(z_0)$ satisfying $\mathcal{T}(z_0)\leq T_{f}$ for all $z_0\in\mathbb{R}^n$. 
Choose
$$
F(x_1)=Q
\left[
\begin{array}{c}
     k_1g_1(x_1)+a_1x_1  \\
     \vdots\\
     k_ng_n(x_1)+a_nx_1
\end{array}
\right].
$$with $Q \in \mathbb R^{n \times n}$  the similarity transformation given by~\eqref{Eq:SimTrans}. 
Then, under the conditions of Theorem 1, the redesigned system~\eqref{Eq:PrescDiff} if fixed-time stable with $\eta T_c$ as the predefined \textit{UBST}. Moreover, if $T_f=T_{\max}^*< +\infty$, the gains $\kappa(t-t_0)$ remain bounded. 
\end{proposition}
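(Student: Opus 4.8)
The plan is to show that the proposed pair $\{\rho,F\}$ meets the hypotheses of Theorem~\ref{Th:MainResult}, after which the stated conclusion (fixed-time stability of~\eqref{Eq:PrescDiff} with UBST $\eta T_c$) is immediate; Assumption~\ref{Assump:Diff} on the original $G(\cdot)$ is already in force through ``the conditions of Theorem~1''. So the work reduces to verifying Assumption~\ref{Assump:NonAut} (on $\rho$, equivalently on $\Phi$) and Assumption~\ref{Assump:Main} (asymptotic stability of the auxiliary system~\eqref{Eq:TauSystem} plus the transversality condition~\eqref{Eq:CondTrans}). The bounded-gain claim will then be a direct invocation of Corollary~\ref{Cor:BoundedGain}.

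First I would dispatch Assumption~\ref{Assump:NonAut}. Integrating $\rho(\tau)^{-1}\frac{d\rho}{d\tau}=\alpha$ gives $\rho(\tau)=\rho(0)e^{\alpha\tau}$, and since $\Phi=(T_c\rho)^{-1}$ by definition, the normalization $\int_0^{\infty}\Phi=1$ forces $\rho(0)=(\alpha T_c)^{-1}$, so $\Phi(\tau)=\alpha e^{-\alpha\tau}$ is the exponential density. This is finite for every $\tau>0$, integrates to one, and is non-increasing, so all three items of Assumption~\ref{Assump:NonAut} hold.

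The core step is Assumption~\ref{Assump:Main}-(i). Specializing~\eqref{Eq:TauSystem} to $r=1$ and $\rho(\tau)^{-1}\frac{d\rho}{d\tau}=\alpha$ gives $\frac{dz}{d\tau}=F(z_1)+(A_0-\alpha M)z+\rho^{-n}D\hat\delta$, and I would apply the constant change of coordinates $\hat z=Q^{-1}z$ induced by~\eqref{Eq:SimTrans}. This single transformation is engineered to do three things at once: because the first row of $Q^{-1}$ equals $C$ we have $\hat z_1=z_1$, so $Q^{-1}F(z_1)=[k_1g_1(\hat z_1)+a_1\hat z_1,\dots,k_ng_n(\hat z_1)+a_n\hat z_1]^T$ strips the leading $Q$ off $F$; $Q$ carries the linear part into its observer canonical (companion) form $A_0-aC$, where $a=[a_1\ \cdots\ a_n]^T$ collects the coefficients of the characteristic polynomial $s^n+a_1s^{n-1}+\cdots+a_n$ shared by $A_0-\alpha M$ and $\Gamma_z$; and since $Q^{-1}D=D$ the disturbance still acts only on the last coordinate. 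The deliberately added terms $a_i\hat z_1=a_iC\hat z$ cancel the feedback $-aC\hat z$, restoring the pure integrator chain $A_0$, so the transformed dynamics are exactly $\frac{d\hat z}{d\tau}=[k_1g_1(\hat z_1),\dots,k_ng_n(\hat z_1)]^T+A_0\hat z+\rho^{-n}D\hat\delta$, i.e.\ the postulated output-injection chain with the disturbance rescaled by $\rho^{-n}$. Since $\rho(\tau)^{-n}\le\rho(0)^{-n}$ is bounded and vanishes as $\tau\to\infty$, the term $\rho^{-n}\hat\delta$ is an admissible disturbance, so the hypothesized asymptotic stability and the bound $\mathcal T(z_0)\le T_f$ transfer through the constant invertible map $Q$ to the auxiliary system, giving Assumption~\ref{Assump:Main}-(i).

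It remains to check the transversality condition~\eqref{Eq:CondTrans} and to read off the gain bound. When $T_f<+\infty$ (finite- or fixed-time stability) this is immediate: $\hat z$, hence $z=Q\hat z$, reaches the origin in finite time, and because $\rho^{-n}\hat\delta\to0$ it eventually becomes rejectable so that $z$ stays at the origin; thus $\rho(\tau)^{i-1}z_i(\tau)\equiv0$ for all large $\tau$ and every limit in~\eqref{Eq:CondTrans} is zero. With Assumptions~\ref{Assump:NonAut}--\ref{Assump:Main} in hand, Theorem~\ref{Th:MainResult} yields fixed-time stability of~\eqref{Eq:PrescDiff} with UBST $\eta T_c$, and when $T_f=T^*_{\max}<+\infty$ Corollary~\ref{Cor:BoundedGain} bounds the gain by $\lim_{\tau\to T^*_{\max}}\rho(\tau)=(\alpha T_c)^{-1}e^{\alpha T^*_{\max}}<+\infty$, establishing boundedness. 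I expect the delicate point to be exactly~\eqref{Eq:CondTrans}: since the weights $\rho^{i-1}\sim e^{(i-1)\alpha\tau}$ grow exponentially, the argument needs the auxiliary state to decay fast enough to overcome them, which is automatic once convergence is in finite time (the regime that also yields bounded gains) but would require a quantitative exponential-rate estimate in the merely asymptotic case $T_f=+\infty$.
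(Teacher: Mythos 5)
Your proof is correct and follows essentially the same route as the paper's: specialize the auxiliary system~\eqref{Eq:TauSystem} with $r=1$ and $\rho(\tau)^{-1}\frac{d\rho(\tau)}{d\tau}=\alpha$, apply the similarity transformation $\mathcal{Q}$ of~\eqref{Eq:SimTrans} to recover the hypothesized output-injection chain (using $\hat{z}_1=z_1$ and $\mathcal{Q}^{-1}D=D$), and conclude via Theorem~\ref{Th:MainResult} and Corollary~\ref{Cor:BoundedGain}. You are in fact somewhat more careful than the paper's own proof, which silently drops the $(r\rho(\tau))^{-n}$ disturbance factor and never mentions condition~\eqref{Eq:CondTrans}; your explicit verification of Assumption~\ref{Assump:NonAut} and your caveat that the case $T_f=+\infty$ needs a rate estimate (cf.\ Lemma~\ref{lemma:exp_stability}) only strengthen the argument.
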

\begin{proof}
See Appendix~\ref{AppendixPropProofs}.
\end{proof}

The above result allows us to obtain from an autonomous fixed-time stable system, a non-autonomous fixed-time stable system with predefined \textit{UBST}. Furthermore, it guarantees that the time-varying gains remain bounded.
If the functions $g_i(\cdot)$ are such that~\eqref{Eq:System1} is fixed-time stable, such as in~\cite{Basin2016,Menard2017,Lopez2018finite,Angulo2013}, then the above result is concerned with Proposition~\ref{Prop:Main}-item-3). If in addition, an \textit{UBST} is known, as in~\cite{Cruz-Zavala2011} the above result is concerned with Corollary~\ref{Cor:BoundedGain}.

Compared to the results derived for autonomous systems~\cite{Basin2016,Menard2017,Lopez2018finite,Cruz-Zavala2011,Angulo2013}, our approach allows to tune the parameters such that the overestimation of the \textit{UBST} is significantly reduced as it will be illustrated in Example~\ref{ex:cruz_zavala}. 

\section{Application to the design of differentiators with fixed-time stability }
\label{Sec:Design}

Here we illustrate the application of our methodology to obtain fixed-time online differentiators with predefined \textit{UBST} from the family of \textit{HOSM} differentiators. More precisely, the following corollary allows deriving high-order fixed time differentiators with time-varying gains from our previous results. Such differentiators are designed such that the differentiation error coincides with~\eqref{Eq:PrescDiff}. 

We illustrate this design to have the form of the Levant's filtering differentiator~\cite{Levant2019}, where $n_f$ is the filter order and $n_d$ is the number of derivatives to be obtained. Thus, after $\eta T_c$, the filtering properties coincide with~\cite{Levant2019}. 

\begin{corollary}
\label{Cor:BasinDiff}
Let $r$, $\rho(\tau)$, and $F(z_1)=[f_1(z_1),\ldots,f_n(z_1)]^T$ be such that the condition of Theorem~\ref{Th:MainResult} are satisfied with $T_{f}\in\Bar{\mathbb{R}}_+$ be such that $\mathcal{T}(z_0)\leq T_{f}$. 
Moreover, let $y(t) \in \mathbb{R}$ be a continuous function $n_d+1$ times differentiable, such that $|\frac{d^{{n_d}+1}}{dt^{n_d+1}}y(t)|\leq L$ and consider the algorithm
\begin{align}
&\dot{w}_i=\left\lbrace 
\begin{array}{lll}
-r^i\kappa(t-t_0)^ik_if_i(w_1)+w_{i+1}     & t\in[t_0,t_0+\eta T_c)\\
-l_i\sgn{w_1}^{\frac{n_d+n_f+1-i}{n_d+n_f+1}}+w_{i+1}     & \text{otherwise}
\end{array}
\right.
\intertext{for $i=1,\ldots,n_f$,}
&\dot{z}_0=\left\lbrace 
\begin{array}{lll}
-r^i\kappa(t-t_0)^ik_if_i(w_1)+z_{1}-y(t)     & t\in[t_0,t_0+\eta T_c)\\
-l_i\sgn{w_1}^{\frac{n_d+n_f+1-i}{n_d+n_f+1}}+z_{1}-y(t)    & \text{otherwise}
\end{array}
\right.
\intertext{for $i=n_f+1$, and}
&\dot{z}_{i-n_f-1}=\left\lbrace 
\begin{array}{lll}
-r^i\kappa(t-t_0)^ik_if_i(w_1)+z_{i-n_f}     & t\in[t_0,t_0+\eta T_c)\\
-l_i\sgn{w_1}^{\frac{n_d+n_f+1-i}{n_d+n_f+1}}+z_{i-n_f}     & \text{otherwise}
\end{array}
\right.
\end{align}
for $i=n_f+2,\ldots,n_d+n_f+1$, where $z_{n_d+1}=0$; $\kappa(t-t_0)=\rho(\psi^{-1}(t-t_0))$, $\psi$ as in~\eqref{Eq:NonAutpsi} and $\eta=\lim_{\tau\to T_{f}}\frac{1}{T_c}\psi(\tau)$. Then, for all $t>t_0+\eta T_c$ one has, $w_i=0$ for all $i=1,\ldots,n_f$ and $z_i=\frac{d^i}{dt^i}y(t)$ for all $i=0,\ldots,n_d$. For $n_f=0$, $w_1$ is defined as $w_1=z_0-y(t)$.
\end{corollary}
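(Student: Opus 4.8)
The plan is to show that the differentiator dynamics, written in terms of the estimation errors, reproduce exactly the redesigned system~\eqref{Eq:PrescDiff} to which Theorem~\ref{Th:MainResult} applies, and then invoke that theorem to conclude convergence by time $t_0+\eta T_c$. First I would introduce the error variables. For the filter part, set $e^w_i = w_i$ (since we expect $w_i\to 0$), and for the differentiation part set $e_i = z_i - \frac{d^i}{dt^i}y(t)$ for $i=0,\ldots,n_d$, with the convention $z_{n_d+1}=0$ meaning $e_{n_d}$ is driven by $-\frac{d^{n_d+1}}{dt^{n_d+1}}y(t)$. Stacking these into a single state vector of dimension $n=n_f+n_d+1$, I would differentiate each error coordinate using the algorithm's dynamics on $[t_0,t_0+\eta T_c)$ and the chain rule on the $\frac{d^i}{dt^i}y(t)$ terms. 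The crucial bookkeeping step is verifying that the $w_{i+1}$ and $z_{i-n_f}$ coupling terms, together with the derivatives of $y(t)$, telescope precisely into the shift structure $A_0 e$, so that the error system becomes $\dot e = H(e_1,t,T_c) + A_0 e + D\delta(t)$ with the identification $\delta(t)= -\frac{d^{n_d+1}}{dt^{n_d+1}}y(t)$, which satisfies $|\delta(t)|\leq L$ by hypothesis.

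Once the error system is shown to coincide with~\eqref{Eq:PrescDiff}, the conditions of Theorem~\ref{Th:MainResult} hold by assumption on $\{r,\rho,F\}$, so the origin of the error system is fixed-time stable with \textit{UBST} equal to $\eta T_c$. This yields $e(t)=0$ for all $t\geq t_0+\eta T_c$, i.e. $w_i=0$ for $i=1,\ldots,n_f$ and $z_i=\frac{d^i}{dt^i}y(t)$ for $i=0,\ldots,n_d$, which is exactly the claimed conclusion. I would also note that after $\eta T_c$ the algorithm switches to the homogeneous \textit{HOSM} form with gains $l_i$ and exponents $\frac{n_d+n_f+1-i}{n_d+n_f+1}$; since the error is already at the origin and this branch is the standard Levant differentiator~\cite{Levant2019} (which keeps the error at zero under $|\delta(t)|\leq L$), the estimates are maintained, recovering the filtering behavior of~\cite{Levant2019}.

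The main obstacle I anticipate is the index-matching in the first branch. The dynamics are written with a running index $i$ that ranges across three blocks (the $n_f$ filter states $w_i$, the state $z_0$ at $i=n_f+1$, and the states $z_{i-n_f-1}$ for $i=n_f+2,\ldots,n_d+n_f+1$), but all three branches use $f_i(w_1)$ with the \emph{same} argument $w_1$. For the error system to match~\eqref{Eq:PrescDiff}, whose nonlinearity $H$ depends only on the first coordinate $y_1$, I must confirm that $w_1$ is indeed the first error coordinate of the stacked system and that the gain $r^i\kappa(t-t_0)^i k_i f_i(w_1)$ in coordinate $i$ aligns with the $i$-th row of $\Lambda(r\kappa(t-t_0))F(\cdot)$, i.e. that $\Lambda(k)F(y_1)$ has $i$-th entry $k^i k_i f_i(y_1)$. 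This requires the identification $F(y_1)=[k_1 f_1(y_1),\ldots,k_n f_n(y_1)]^T$ (absorbing the $k_i$), together with the definition $\Lambda(k)=\mathrm{diag}(k,k^2,\ldots,k^n)$ from Theorem~\ref{Th:MainResult}. The special case $n_f=0$, where $w_1$ is redefined as $w_1=z_0-y(t)=e_0$, must be checked separately to confirm the first error coordinate is still the argument of every $f_i$, closing the argument in the degenerate filter-order case.
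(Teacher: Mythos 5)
Your proposal is correct and takes essentially the same route as the paper: the paper's proof is exactly your first step, namely the change of variables $x_i=w_i$ for $i=1,\ldots,n_f$ and $x_{n_f+1+i}=z_i-\frac{d^i}{dt^i}y(t)$, which turns the algorithm into system~\eqref{Eq:PrescDiff} with $n=n_d+n_f+1$ and $\delta(t)=\frac{d^{n_d+1}}{dt^{n_d+1}}y(t)$ (your opposite sign is immaterial since only $|\delta(t)|\leq L$ is used), followed by invoking Theorem~\ref{Th:MainResult}. Your extra bookkeeping --- the index alignment of $r^i\kappa(t-t_0)^i k_i f_i(w_1)$ with the $i$-th row of $\Lambda(r\kappa(t-t_0))F(\cdot)$, the degenerate case $n_f=0$, and the observation that the post-switch $G$-branch keeps the error at the origin --- merely spells out what the paper declares to follow trivially.
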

The result of Corollary~\ref{Cor:BasinDiff} follows trivially by noticing that taking $x_i=w_i$, for $i=1,\ldots,n_f$ and $x_{n_f+1+i}=z_i-\frac{d^i}{dt^i}y(t)$ leads to system~\eqref{Eq:PrescDiff}, where $n=n_d+n_f+1$ and $\delta(t)=\frac{d^{n_d+1}}{dt^{n_d+1}}y(t)$.

\begin{example}
\label{ex:raw_levant}
Consider system~\eqref{Eq:PrescDiff} with $F(x_1)$ as in Proposition \ref{Prop:STFixed} with $n=3$, $l_1=2L^\frac{1}{3}$, $l_2=2.12L^\frac{2}{3}$, $l_3=1.1L$ and $L=2.2$. With these parameters, let us apply directly Corollary \ref{Cor:BasinDiff} with $n_d=1$.
In this case $\rho(\tau) = \frac{\pi}{2T_c}(\tau^2 + 1)$ which verifies $\lim_{\tau\to\infty}\rho(\tau)^{-1}\frac{d\rho(\tau)}{d\tau}= 0$, $\int_0^\infty (T_c\rho(\tau))^{-1}d\tau = 1$, and $\kappa(t-t_0) = \frac{\pi}{2}\sec^2\left(\frac{\pi(t-t_0)}{2T_c}\right)$. The resulting system was simulated in order to differentiate $y(t)=-0.4\sin(t)+0.8\cos(0.8t)$. Figure \ref{fig:raw_levant} shows the trajectories of $x(t)=(x_1(t),x_2(t),x_2(t))^T$ with $x(0) = (100,0,0)^T$ as well as the values of $y(t)$ and $\dot{y}(t)$. Moreover, the performance of the system with measurement noise $0.01\cos(10t)+0.001\cos(30t)$ is shown.

\begin{figure}
    \centering
\def\svgwidth{8.5cm}    
\begingroup%
  \makeatletter%
  \providecommand\color[2][]{%
    \errmessage{(Inkscape) Color is used for the text in Inkscape, but the package 'color.sty' is not loaded}%
    \renewcommand\color[2][]{}%
  }%
  \providecommand\transparent[1]{%
    \errmessage{(Inkscape) Transparency is used (non-zero) for the text in Inkscape, but the package 'transparent.sty' is not loaded}%
    \renewcommand\transparent[1]{}%
  }%
  \providecommand\rotatebox[2]{#2}%
  \newcommand*\fsize{\dimexpr\f@size pt\relax}%
  \newcommand*\lineheight[1]{\fontsize{\fsize}{#1\fsize}\selectfont}%
  \ifx\svgwidth\undefined%
    \setlength{\unitlength}{352.15393066bp}%
    \ifx\svgscale\undefined%
      \relax%
    \else%
      \setlength{\unitlength}{\unitlength * \real{\svgscale}}%
    \fi%
  \else%
    \setlength{\unitlength}{\svgwidth}%
  \fi%
  \global\let\svgwidth\undefined%
  \global\let\svgscale\undefined%
  \makeatother%
  \begin{picture}(1,0.80971289)%
    \lineheight{1}%
    \setlength\tabcolsep{0pt}%
    \put(0,0){\includegraphics[width=\unitlength]{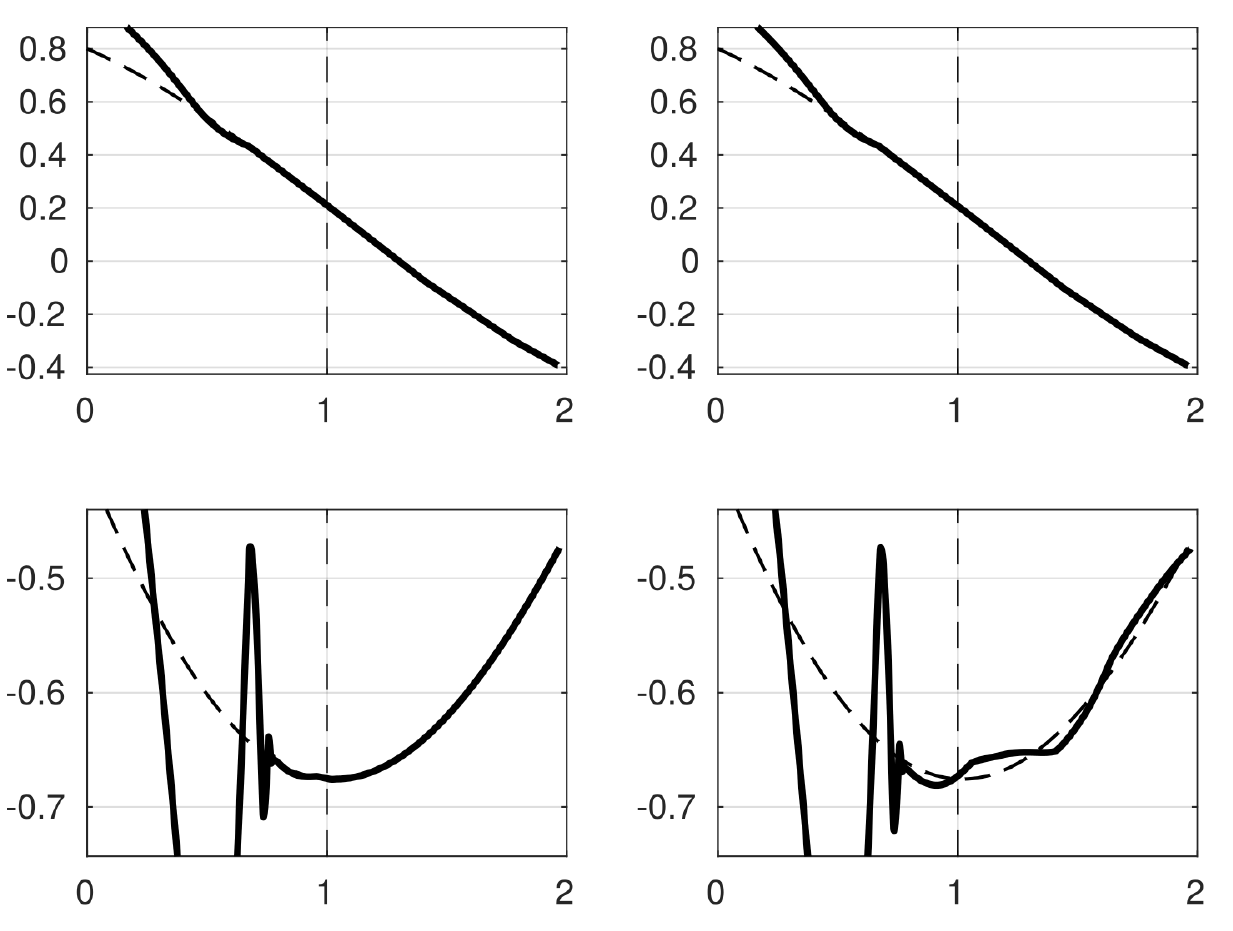}}%
    \put( 0.160482251,0.744956467){  \rotatebox{0}{\makebox(0,0)[lb]{$x_1(t),y(t)$}}
    }%
     \put( 0.670482251,0.744956467){  \rotatebox{0}{\makebox(0,0)[lb]{$x_1(t),y(t)$}}
    }%
    \put( 0.160482251,0.34956467){  \rotatebox{0}{\makebox(0,0)[lb]{$x_2(t),\dot{y}(t)$}}
    }%
     \put( 0.670482251,0.34956467){  \rotatebox{0}{\makebox(0,0)[lb]{$x_2(t),\dot{y}(t)$}}
    }%
    
    \put( 0.470482251,0.43956467){\rotatebox{0}{\makebox(0,0)[lb]{$t$}}}%
    \put( 0.98482251,0.43956467){\rotatebox{0}{\makebox(0,0)[lb]{$t$}}}%
    \put( 0.470482251,0.03956467){\rotatebox{0}{\makebox(0,0)[lb]{$t$}}}%
    \put( 0.98482251,0.03956467){\rotatebox{0}{\makebox(0,0)[lb]{$t$}}}%
    
  \end{picture}%
\endgroup%
    \caption{Online differentiation of the signal $y(t)=-0.4\sin(t)+0.8\cos(0.8t)$ with predefined-time convergence at $T_c=1$ of Example~\ref{ex:raw_levant}, in the noiseless case (left) and with noise (right).}
    \label{fig:raw_levant}
\end{figure}

\end{example}

\begin{example}
\label{ex:marco_tulio}
According to \cite{Angulo2013}, the system
\begin{equation}
\begin{aligned}
    \frac{dz_1}{d\tau} &= -2L^\frac{1}{3}\theta\sgn{z_1}^\frac{2}{3} - 7(1-\theta)\sgn{z_1}^{1+\frac{2}{100}}+z_2 \\
    \frac{dz_2}{d\tau} &= -\frac{3}{2}\sqrt{2}L^\frac{4}{6}\theta\sgn{z_1}^\frac{1}{3}-\frac{15}{7}(1-\theta)\sgn{z_1}^{1+\frac{4}{100}}+z_3 \\
    \frac{dz_3}{d\tau} &= -\frac{11}{10}L\theta\sgn{z_1}^0-(1-\theta)\sgn{z_1}^{1+\frac{6}{100}} + \hat{\delta}(\tau)\\
\end{aligned}
\end{equation}
with $\hat{\delta}(\tau)\leq L=2.5$ and $\theta=0$ for $\tau\leq T_\theta$ and $\theta=1$ otherwise, is fixed-time stable.
Moreover, the vector field $F(x_1)$ is obtained from these parameters and Proposition \ref{Basin}. Let us apply Corollary \ref{Cor:BasinDiff} with $n_d=1$. In this case $\rho(\tau)$ is chosen as in Example \ref{ex:cruz_zavala}. Figure \ref{fig:cruz_zavala} shows the trajectories of $x(t)=(x_1(t),x_2(t),x_3(t))^T$ with $x(0) = (100,0,0)^T$ as well as the values of $y(t)$ and $\dot{y}(t)$. Moreover, the performance of the system with measurement noise $0.01\cos(10t)+0.001\cos(30t)$ is shown.

\begin{figure}
    \centering
\def\svgwidth{8.5cm}    
\begingroup%
  \makeatletter%
  \providecommand\color[2][]{%
    \errmessage{(Inkscape) Color is used for the text in Inkscape, but the package 'color.sty' is not loaded}%
    \renewcommand\color[2][]{}%
  }%
  \providecommand\transparent[1]{%
    \errmessage{(Inkscape) Transparency is used (non-zero) for the text in Inkscape, but the package 'transparent.sty' is not loaded}%
    \renewcommand\transparent[1]{}%
  }%
  \providecommand\rotatebox[2]{#2}%
  \newcommand*\fsize{\dimexpr\f@size pt\relax}%
  \newcommand*\lineheight[1]{\fontsize{\fsize}{#1\fsize}\selectfont}%
  \ifx\svgwidth\undefined%
    \setlength{\unitlength}{352.15393066bp}%
    \ifx\svgscale\undefined%
      \relax%
    \else%
      \setlength{\unitlength}{\unitlength * \real{\svgscale}}%
    \fi%
  \else%
    \setlength{\unitlength}{\svgwidth}%
  \fi%
  \global\let\svgwidth\undefined%
  \global\let\svgscale\undefined%
  \makeatother%
  \begin{picture}(1,0.80971289)%
    \lineheight{1}%
    \setlength\tabcolsep{0pt}%
    \put(0,0){\includegraphics[width=\unitlength]{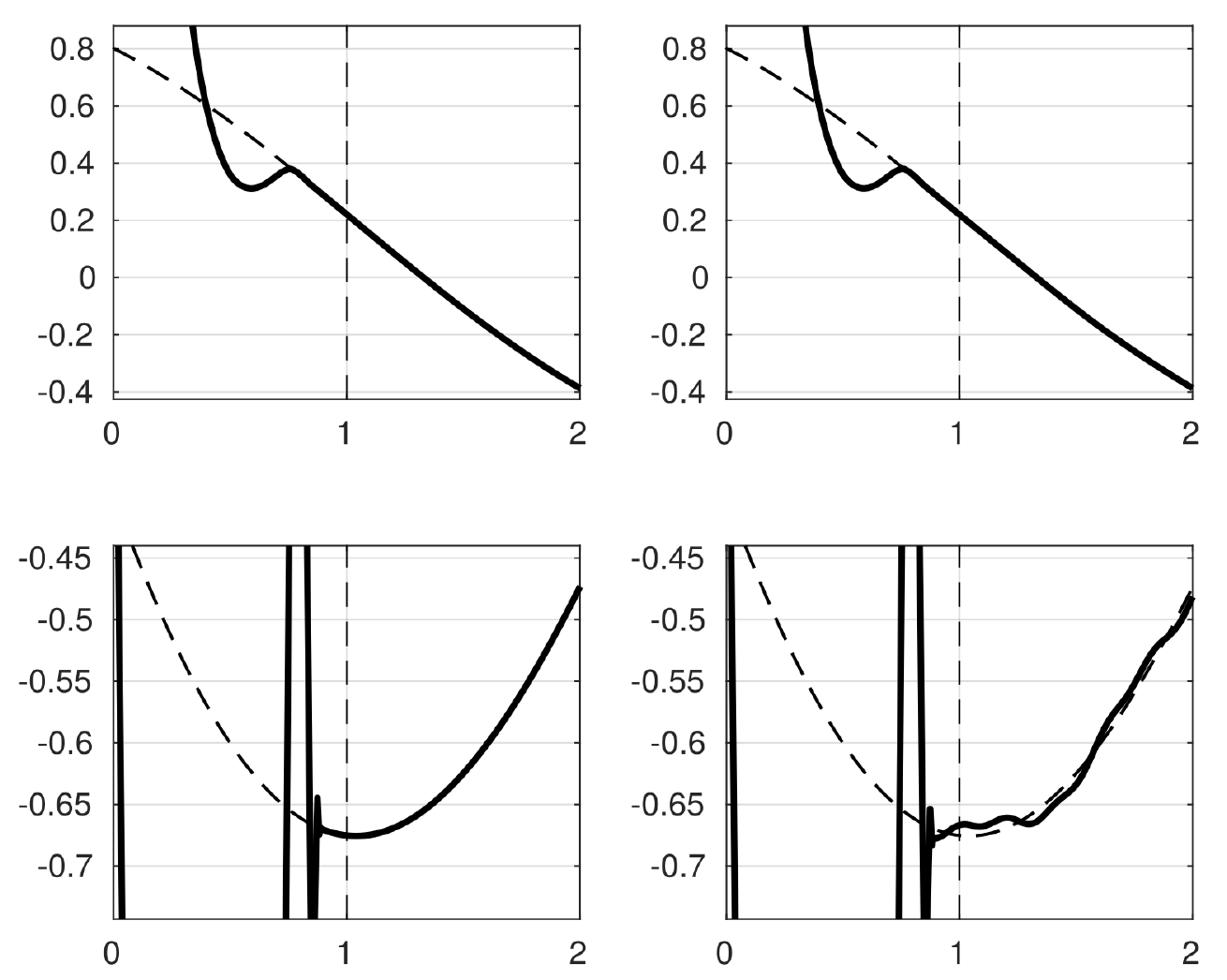}}%
        \put( 0.160482251,0.784956467){  \rotatebox{0}{\makebox(0,0)[lb]{$x_1(t),y(t)$}}
    }%
     \put( 0.670482251,0.784956467){  \rotatebox{0}{\makebox(0,0)[lb]{$x_1(t),y(t)$}}
    }%
    \put( 0.160482251,0.35956467){  \rotatebox{0}{\makebox(0,0)[lb]{$x_2(t),\dot{y}(t)$}}
    }%
     \put( 0.670482251,0.35956467){  \rotatebox{0}{\makebox(0,0)[lb]{$x_2(t),\dot{y}(t)$}}
    }%
    \put( 0.490482251,0.45956467){\rotatebox{0}{\makebox(0,0)[lb]{$t$}}}%
    \put( 0.99482251,0.45956467){\rotatebox{0}{\makebox(0,0)[lb]{$t$}}}%
    \put( 0.490482251,0.03956467){\rotatebox{0}{\makebox(0,0)[lb]{$t$}}}%
    \put( 0.99482251,0.03956467){\rotatebox{0}{\makebox(0,0)[lb]{$t$}}}%
    
  \end{picture}%
\endgroup%
    \caption{Online differentiation of the signal $y(t)=-0.4\sin(t)+0.8\cos(0.8t)$ with predefined-time convergence at $T_c=1$ of Example~\ref{ex:marco_tulio}, in the noiseless case (left) and with noise (right).}
    \label{fig:marco_tulio}
\end{figure}
\end{example}

\begin{example}
\label{ex:cruz_zavala}
Let $g_1(x_1) = \sgn{x_1}^\frac{1}{2}+\sgn{x_1}^\frac{3}{2}$, $g_2(x_1) = \frac{1}{2}\sign{x_1}+2 x_1 + \frac{3}{2}\sgn{x_1}^2$.
According to \cite{Cruz-Zavala2011}, the system
\begin{equation}
\begin{aligned}
    \frac{dz_1}{d\tau} &=-2\sqrt{3}g_1(z_1) + z_2 \\
    \frac{dz_2}{d\tau} &= -6g_2(z_1) + \hat{\delta}(\tau)
\end{aligned}
\end{equation}
with $\hat{\delta}(\tau)\leq 2.5$ is fixed-time stable with $T_{\max}^* = 233.7349$.
Moreover, the vector field $F(x_1)$ is obtained from these parameters and Proposition \ref{Basin}. Let us apply Corollary~\ref{Cor:BasinDiff} with $n_d=1$. In this case, $t_0=0$, $\rho(\tau) = \frac{1}{T_c}\exp(\tau)$ and thus, $\kappa(t-t_0) = \frac{1}{T_c-(t-t_0)}$. The resulting system was simulated to obtain the derivative of $y(t)=-0.4\sin(t)+0.8\cos(0.8t)$. Figure~\ref{fig:cruz_zavala} shows, on the left column, the trajectories of $x(t)=(x_1(t),x_2(t))^T$ with $x(0) = (100,0)^T$ as well as the values of $y(t)$ and $\dot{y}(t)$, and, on the right column, the performance of the system with measurement noise $0.01\cos(10t)+0.001\cos(30t)$.

\begin{figure}
    \centering
\def\svgwidth{9.5cm}    
\begingroup%
  \makeatletter%
  \providecommand\color[2][]{%
    \errmessage{(Inkscape) Color is used for the text in Inkscape, but the package 'color.sty' is not loaded}%
    \renewcommand\color[2][]{}%
  }%
  \providecommand\transparent[1]{%
    \errmessage{(Inkscape) Transparency is used (non-zero) for the text in Inkscape, but the package 'transparent.sty' is not loaded}%
    \renewcommand\transparent[1]{}%
  }%
  \providecommand\rotatebox[2]{#2}%
  \newcommand*\fsize{\dimexpr\f@size pt\relax}%
  \newcommand*\lineheight[1]{\fontsize{\fsize}{#1\fsize}\selectfont}%
  \ifx\svgwidth\undefined%
    \setlength{\unitlength}{352.15393066bp}%
    \ifx\svgscale\undefined%
      \relax%
    \else%
      \setlength{\unitlength}{\unitlength * \real{\svgscale}}%
    \fi%
  \else%
    \setlength{\unitlength}{\svgwidth}%
  \fi%
  \global\let\svgwidth\undefined%
  \global\let\svgscale\undefined%
  \makeatother%
  \begin{picture}(1,0.80971289)%
    \lineheight{1}%
    \setlength\tabcolsep{0pt}%
    \put(0,0){\includegraphics[width=\unitlength]{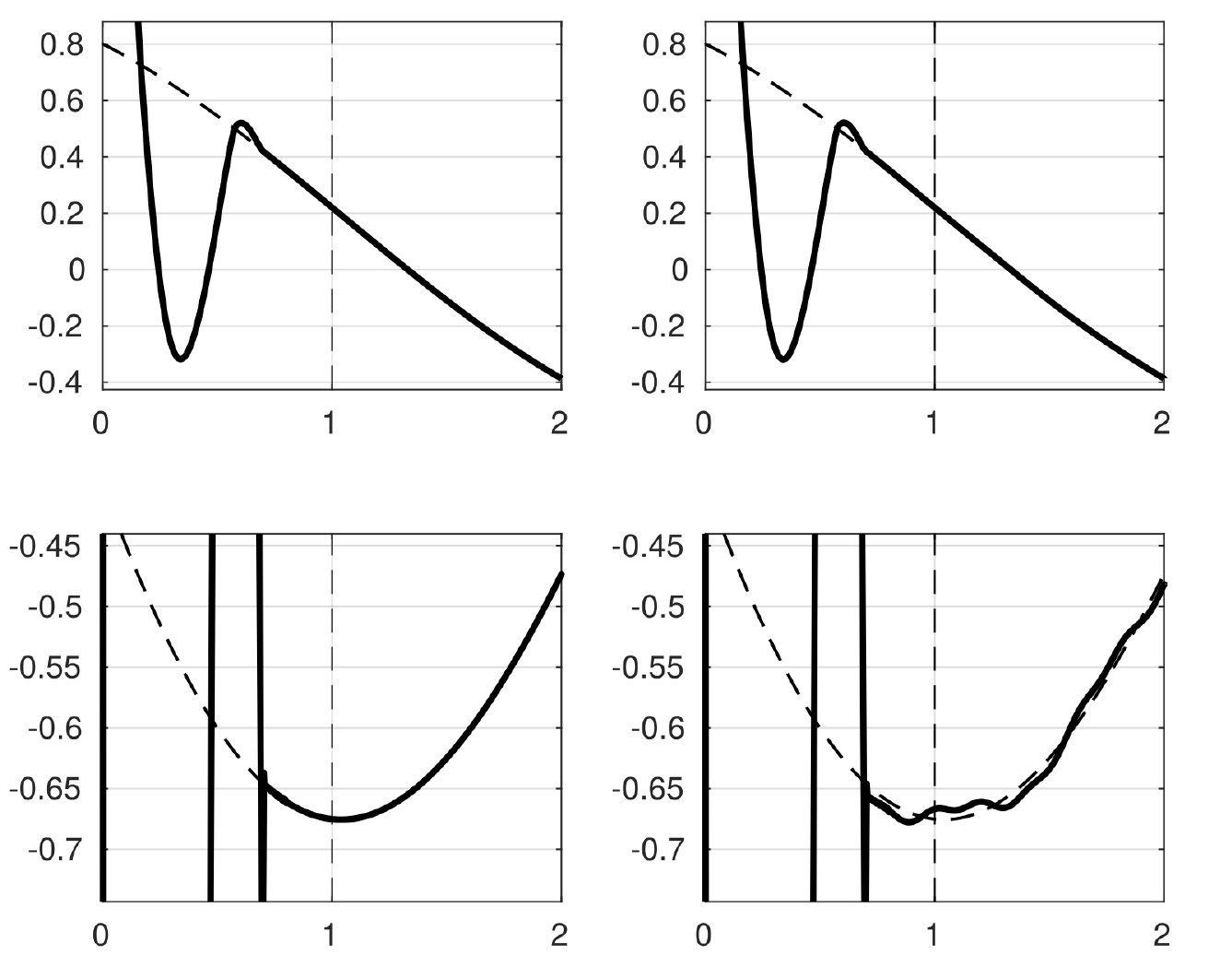}}%
        \put( 0.160482251,0.784956467){  \rotatebox{0}{\makebox(0,0)[lb]{$x_1(t),y(t)$}}
    }%
     \put( 0.670482251,0.784956467){  \rotatebox{0}{\makebox(0,0)[lb]{$x_1(t),y(t)$}}
    }%
    \put( 0.160482251,0.35956467){  \rotatebox{0}{\makebox(0,0)[lb]{$x_2(t),\dot{y}(t)$}}
    }%
     \put( 0.670482251,0.35956467){  \rotatebox{0}{\makebox(0,0)[lb]{$x_2(t),\dot{y}(t)$}}
    }%
    
    \put( 0.470482251,0.45956467){\rotatebox{0}{\makebox(0,0)[lb]{$t$}}}%
    \put( 0.95482251,0.45956467){\rotatebox{0}{\makebox(0,0)[lb]{$t$}}}%
    \put( 0.470482251,0.03956467){\rotatebox{0}{\makebox(0,0)[lb]{$t$}}}%
    \put( 0.95482251,0.03956467){\rotatebox{0}{\makebox(0,0)[lb]{$t$}}}%
  \end{picture}%
\endgroup%
    \caption{Online differentiation of the signal $y(t)=-0.4\sin(t)+0.8\cos(0.8t)$ with predefined-time convergence at $T_c=1$ of Example~\ref{ex:cruz_zavala}, in the noiseless case (left) and with noise (right).}
    \label{fig:cruz_zavala}
\end{figure}

\end{example}

\section{Conclusion}
\label{Sec:Conclu}
This technical note introduced a new class of non-autonomous fixed-time stable systems with predefined \textit{UBST}, which is based on time-varying gains. This new result enables the design of unknown input observers and online differentiation algorithms where an upper bound for the convergence time is set a priori as a parameter of the algorithm. 

We present conditions such that the settling time of every nonzero trajectory is precisely the predefined one. Moreover, we provide a methodology, to derive an autonomous fixed-time stable system with predefined \textit{UBST} from an homogeneous algorithm with fixed-time convergence whose \textit{UBST} estimate is too-conservative, significantly reducing the over-estimation. Unlike existing prescribed-time observers such as~\cite{Holloway2019}, in our approach, the time-varying gain remains bounded, which makes our approach realizable. Another advantage with respect to~\cite{Holloway2019} is that predefined-time convergence is guaranteed even in the presence of disturbances, which enables the application to unknown input observers and online differentiation algorithms. 

One limitation of our approach is that, designing a non-autonomous fixed-time system guaranteeing bounded time-varying gains for every initial condition, requires that the auxiliary system~\eqref{Eq:TauSystem} (or system~\eqref{Eq:System1} when Proposition~\eqref{Basin} is used) is fixed-time stable with a known \textit{UBST}. Thus, developing autonomous fixed-time systems with an explicit \textit{UBST} is required to broaden the applicability of our approach. We hope our work will catalyze efforts towards achieving this goal. Fortunately,  when initial conditions are bounded as often happens in practice, we can exploit the results on Lyapunov analysis for the arbitrary order \textit{HOSM} differentiator presented in~\cite{Cruz2018} to produce a non-autonomous fixed-time system guaranteeing bounded gains.

\appendix
\subsection{Preliminaries on homogeneity and time-scale transformations}

\subsubsection{Homogeneity}

\begin{definition}
\label{Def.Homogeneous}
\cite{Bhat2005}
Consider the vector of weights $\mathbf{r}=[r_1,\ldots,r_n]^T$, where $r_i>0$, $i=1,\ldots,n$. A vector field $f:\mathbb{R}^n\rightarrow\mathbb{R}^n$ is said to be $\mathbf{r}$-homogeneous of degree $d\geq-\min_{1\leq i\leq n}r_i$ with respect to the dilation matrix $\Delta_{\mathbf{r}}(\lambda)=diag(r_1,\ldots,r_n)$, where $\lambda>0$, if
$$
f(x)=\lambda^{-d}\Delta_{\mathbf{r}}^{-1}(\lambda)f(\Delta_{\mathbf{r}}(\lambda)x)
$$
or equivalently if the $i$-th element, $i=1,\ldots,n$, of $f(x)$ satisfies
$$f_i(x)=\lambda^{-(d+r_i)}f_i(\Delta_{\mathbf{r}}(\lambda)x).$$

\end{definition}



\begin{theorem}
\label{Th:existenceLyapunov}
Let the vector field $f:\mathbb{R}^n\rightarrow\mathbb{R}^n$ be discontinuous and $\mathbf{r}$-homogeneous with negative degree $d\in\mathbb{R}$, with respect to the dilation $\Delta_{\mathbf{r}}(\lambda)$. If for the differential inclusion  $\dot{x}=-f(x)$ the origin is (strongly) globally asymptotically stable then for all $k>-d$, there exists a function $V:\mathbb{R}^n\to \mathbb{R}$ which satisfies:
\begin{enumerate}
    \item $V$ is $\mathbf{r}$-homogeneous of degree $k\in\mathbb{R}$, i.e. $V(\Delta_{\mathbf{r}}(\lambda)x) = \lambda^kV(x)$ for all $\lambda>0$.
    \item There exists positive constants $c_3$ and $p<1$ such that $\frac{\partial V}{\partial x}f(x) < -c_3V^p, \ \ \forall x\neq 0$.    
    \item $V(0)=0$, $V(x)>0 \ \forall x\neq 0$ and $V(x)\to+\infty$ as $\|x\|\to +\infty$
    \item There exist two positive constants $c_1,c_2$ such that $c_1\|x\|_{\mathbf{r}}^k\leq V(x) \leq c_2\|x\|_{\mathbf{r}}^k$
\end{enumerate}
\end{theorem}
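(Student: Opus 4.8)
The plan is to read the statement as a converse Lyapunov theorem for the strongly globally asymptotically stable, homogeneous differential inclusion $\dot x\in -F(x)$, where $F$ is the Filippov set-valued map of $f$, and to produce a homogeneous Lyapunov function in three stages: obtain a homogeneous $V$ of the prescribed degree $k$, certify strict decrease along the dynamics, and then extract the quantitative estimates (2)--(4) from homogeneity together with compactness of the unit homogeneous sphere $S=\{x:\|x\|_{\mathbf r}=1\}$. The organizing observation is a degree count: if $V$ is homogeneous of degree $k$ then, by Definition~\ref{Def.Homogeneous}, each $\partial V/\partial x_i$ is homogeneous of degree $k-r_i$ while $f_i$ is homogeneous of degree $d+r_i$, so every product $\tfrac{\partial V}{\partial x_i}f_i$, and hence the derivative $\dot V$ of $V$ along the dynamics, is homogeneous of degree $k+d$.

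For the concrete construction I would use the settling-time function. Since $f$ is homogeneous of negative degree $d$ and the origin of $\dot x=-f(x)$ is globally asymptotically stable, the system is in fact finite-time stable \cite{Bhat2005}, with a settling-time function $\mathcal S(x)$ that is homogeneous of degree $-d>0$, i.e. $\mathcal S(\Delta_{\mathbf r}(\lambda)x)=\lambda^{-d}\mathcal S(x)$; this follows from the flow scaling $\Phi(t,\Delta_{\mathbf r}(\lambda)x)=\Delta_{\mathbf r}(\lambda)\Phi(\lambda^{d}t,x)$ of a degree-$d$ homogeneous field. I would then set $V(x):=\mathcal S(x)^{-k/d}$, which is positive definite, radially unbounded, and homogeneous of degree $k$. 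Along the dynamics $\mathcal S$ decreases at unit rate, $\tfrac{d}{dt}\mathcal S(x(t))=-1$, so $\dot V=\tfrac{k}{d}\,\mathcal S^{-k/d-1}=-\tfrac{k}{|d|}V^{(k+d)/k}$. This yields the decrease estimate of item (2), with the left-hand side read as the derivative of $V$ along the stable dynamics $\dot x=-f(x)$, and with $p=(k+d)/k=1+d/k$ and $c_3=k/|d|$; since $d<0$ and $k>-d$ we have $0<p<1$, exactly the admissible range.

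It remains to record (3) and (4), which are purely homogeneity-plus-compactness facts and also reproduce (2) without the explicit formula. Writing any $x\neq0$ as $x=\Delta_{\mathbf r}(\|x\|_{\mathbf r})\omega$ with $\omega\in S$, homogeneity gives $V(x)=\|x\|_{\mathbf r}^{k}V(\omega)$; continuity and positivity of $V$ on the compact set $S$ give $0<c_1\le V(\omega)\le c_2$, which is (4), and (3) is then immediate. Likewise, strict decrease gives $\dot V\le -c<0$ on $S$, and the scalings $\dot V(x)=\|x\|_{\mathbf r}^{k+d}\dot V(\omega)$ and $V(x)^p=\|x\|_{\mathbf r}^{kp}V(\omega)^p$ match when $p=(k+d)/k$, delivering $\dot V\le -c_3 V^p$ with a suitable $c_3>0$.

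The main obstacle is regularity. The settling-time function of a \emph{discontinuous} homogeneous system, whose trajectories are Filippov solutions of a differential inclusion, need not be $C^1$, whereas (2) refers to $\partial V/\partial x$. I would handle this either by showing $\mathcal S$ (hence $V$) is locally Lipschitz and interpreting (2) in the set-valued sense for a.e.\ $x$, or, more robustly, by replacing the settling-time construction with Rosier's homogenization: start from a smooth strict Lyapunov function $W$ furnished by a converse theorem for strongly asymptotically stable differential inclusions, and average $W$ over the dilation group to manufacture a homogeneous $V$ of degree $k$, $C^1$ away from the origin, that retains strict set-valued decrease. Either way the degree count of the first paragraph closes the argument; the delicate point throughout is maintaining strict decrease for the whole Filippov inclusion rather than for a single-valued field.
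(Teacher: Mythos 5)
Your fallback route is, in fact, the paper's actual proof: the paper disposes of item (1) by citing \cite[Theorem~4.1]{Bernuau2013b}, which is precisely the Rosier-style construction you describe (average a smooth converse Lyapunov function for the strongly GAS differential inclusion over the dilation group to obtain a homogeneous $V$ of prescribed degree $k$, smooth away from the origin), and then obtains items (2)--(4) exactly by your third-paragraph argument: homogeneity plus compactness of the homogeneous sphere, with the degree count $kp=k+d$ forcing $p=1+d/k\in(0,1)$ when $k>-d$ (this is the \cite{Bhat2005}-type argument the paper invokes). So on that branch your proposal matches the paper, and your closing remark about needing \emph{set-valued} strict decrease --- i.e.\ $\max_{v\in F(x)}\langle\nabla V(x),v\rangle<0$, with upper semicontinuity of this map on the compact sphere supplying the uniform bound $-c$ --- is the right technical point that the citation to \cite{Bernuau2013b} is there to cover.

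Your \emph{primary} construction, $V=\mathcal S^{-k/d}$ from the settling-time function, does not work as a proof in this setting, for the reason you yourself flag plus one you understate: beyond the failure of $C^1$ regularity, the identity $\frac{d}{dt}\mathcal S(x(t))=-1$ presupposes uniqueness of solutions, whereas here solutions are Filippov solutions of a discontinuous field and may be non-unique; with $\mathcal S$ defined as the supremal settling time one only gets the inequality $\mathcal S(x(t))\leq\mathcal S(x_0)-t$ by concatenation of solutions, and $\mathcal S$ need not even be continuous without extra robustness hypotheses. So that construction can serve only as a heuristic identifying the sharp exponent $p=(k+d)/k$ and constant $c_3=k/|d|$ --- which it does correctly --- not as the load-bearing argument; the Rosier/Bernuau averaging is not an optional repair but the necessary one. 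One further observation: you silently corrected a sign inconsistency in the statement itself --- with the stable dynamics written as $\dot x=-f(x)$, item (2) as printed would make $V$ \emph{increase} along trajectories; as the theorem is actually used later in the paper (proof of Proposition~\ref{Prop:STFixed}), $f$ is the stable field and item (2) is the decrease condition, which is the reading you adopted.
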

\begin{proof}
The first item is found in~\cite[Theorem~4.1]{Bernuau2013b}. Items $(2)$--$(4)$ follow from the homogeneity property of $V$ and the negative homogeneity degree of $f(x)$~\cite{Bhat2005}.
\end{proof}




\subsubsection{Time-scale transformations}

As in~\cite{Pico2013,aldana2019design}, the trajectories corresponding to the system solutions of~\eqref{eq:sys} are interpreted, in the sense of differential geometry~\cite{Kuhnel2015}, as regular parametrized curves. Since we apply regular parameter transformations over the time variable, then without ambiguity, this reparametrization is sometimes referred to as time-scale transformation.

\begin{definition}\cite[Definition~2.1]{Kuhnel2015}
\label{Def:RegularParamCurve}
A regular parametrized curve, with parameter $t$, is a $C^1(\mathcal{I})$ immersion $c: \mathcal{I}\to \mathbb{R}$, defined on a real interval $\mathcal{I} \subseteq \mathbb{R}$. This means that $\frac{dc}{dt}\neq 0$ holds everywhere.
\end{definition}

\begin{definition}\cite[Pg.~8]{Kuhnel2015}
\label{Def:RegularCurve}
A regular curve is an equivalence class of regular parametrized curves, where the equivalence relation is given by regular (orientation preserving) parameter transformations $\varphi$, where $\varphi:~\mathcal{I}~\to~\mathcal{I}'$ is $C^1(\mathcal{I})$, bijective and $\frac{d\varphi}{dt}>0$. Therefore, if $c:\mathcal{I}\to\mathbb{R}$ is a regular parametrized curve and $\varphi:\mathcal{I}\to \mathcal{I}'$ is a regular parameter transformation, then $c$  and  $c\circ\varphi:\mathcal{I}'\to\mathbb{R}$ are considered to be equivalent.
\end{definition}



\begin{lemma}
\label{Lemma:ParamTransf}
\cite{aldana2019design}
Let the map $\psi:[0,\infty)\to[0,T_c)$ be given by~\eqref{Eq:NonAutpsi},
where $T_c>0$ is a parameter and $\Phi(\cdot)$ satisfies Assumption~\ref{Assump:NonAut}. Moreover, let the system $\frac{dy}{d\tau} = g(y)\in\mathbb{R}^n$ with $y(0) = y_0$, be such that the origin is globally stable with settling time $\mathcal{T}(y_0)$ and has a unique solution. 
Then, the origin of the system
\begin{equation}
    \dot{x} =  \frac{1}{T_c}\Phi(\psi^{-1}(t-t_0))^{-1}g(x), \ \ x(0) = x_0
\end{equation}
is fixed time stable with settling time function given by $T(x_0,t_0)=\lim_{\tau\to\mathcal{T}(x_0)}\psi(\tau)\leq T_c$ and has a unique solution $\forall t\in[t_0,\infty)$ for each initial condition $x_0$. Furthermore, the bijective function $\varphi:\mathcal{I}=[t_0,t_0 + \lim_{\tau\to\mathcal{T}(x_0)}\psi(\tau))\to [0,\mathcal{T}(x_0))$  defined by $\varphi^{-1}(\tau) = \psi(\tau) + t_0$  is a parameter transformation.
\end{lemma}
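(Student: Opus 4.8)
The plan is to prove this by a time-scale (regular parameter) transformation that reduces the non-autonomous system to the given autonomous one $\frac{dy}{d\tau}=g(y)$. First I would establish that $\psi$ is a regular parameter transformation in the sense of Definition~\ref{Def:RegularCurve}. From Assumption~\ref{Assump:NonAut}, the map $\psi(\tau)=T_c\int_0^\tau\Phi(\xi)d\xi$ is continuous and strictly increasing, with derivative $\psi'(\tau)=T_c\Phi(\tau)>0$ wherever defined; moreover $\psi(0)=0$ and $\lim_{\tau\to\infty}\psi(\tau)=T_c\int_0^\infty\Phi=T_c$. Hence $\psi:[0,\infty)\to[0,T_c)$ is a bijection with inverse $\psi^{-1}:[0,T_c)\to[0,\infty)$, and the map $\varphi$ defined through $\varphi^{-1}(\tau)=\psi(\tau)+t_0$ inherits bijectivity and the orientation-preserving property $\frac{d\varphi^{-1}}{d\tau}>0$. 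This already settles the last assertion of the lemma.

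Next I would transport the autonomous trajectory through this transformation and verify that it solves the non-autonomous system. Setting $\tau=\psi^{-1}(t-t_0)$ and defining $x(t):=y(\psi^{-1}(t-t_0))$, the chain rule gives
\begin{equation}
\frac{dx}{dt}=\frac{dy}{d\tau}\,\frac{d\tau}{dt}=g(y)\,\frac{1}{\psi'(\tau)}=\frac{1}{T_c}\Phi(\psi^{-1}(t-t_0))^{-1}g(x),
\end{equation}
which is exactly the stated dynamics, with $x(t_0)=y(0)=y_0$, so that $x_0=y_0$ and $\mathcal{T}(x_0)=\mathcal{T}(y_0)$. Because $\psi^{-1}$ is a bijection carrying the $t-t_0$ interval $[0,\lim_{\tau\to\mathcal{T}(x_0)}\psi(\tau))$ onto the $\tau$ interval $[0,\mathcal{T}(x_0))$, and the autonomous system has unique solutions, the transformed system inherits existence and uniqueness there. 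The origin of the $y$-system is reached as $\tau\to\mathcal{T}(x_0)$, so $x$ reaches the origin as $t\to t_0+\lim_{\tau\to\mathcal{T}(x_0)}\psi(\tau)$; this identifies $T(x_0,t_0)=\lim_{\tau\to\mathcal{T}(x_0)}\psi(\tau)$. Monotonicity of $\psi$ together with $\psi(\tau)<T_c$ then yields $T(x_0,t_0)\le T_c$ uniformly in $x_0$, i.e. fixed-time stability with \textit{UBST} $T_c$ in the sense of Definition~\ref{def:fixed}.

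Finally, I would extend the solution past the settling time and confirm global definedness and uniqueness on all of $[t_0,\infty)$. Since $g(0)=0$, once $x$ reaches the origin it remains there, so setting $x(t):=0$ for $t\ge t_0+T(x_0,t_0)$ produces a solution defined for every $t\ge t_0$; uniqueness on the convergence interval together with the equilibrium property makes this extension the unique solution.

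I expect the main obstacle to be the behavior near the endpoint $t-t_0\to T_c$, where the gain $\Phi(\psi^{-1}(t-t_0))^{-1}$ diverges because $\Phi(\tau)\to 0$ as $\tau\to\infty$. The argument must show that the state settles at the origin no later than $t_0+T_c$, so that this singularity is never actually encountered by any trajectory and the product $\Phi^{-1}g(x)$ stays well-behaved; equivalently, the unbounded tail of the $\tau$-axis is compressed into a finite $t$-interval precisely because $\int_0^\infty\Phi=1<\infty$. Care is also needed to treat the two regimes $\mathcal{T}(x_0)<\infty$ and $\mathcal{T}(x_0)=\infty$ uniformly, using continuity and monotonicity of $\psi$ to pass the limit through $\psi$ in both cases.
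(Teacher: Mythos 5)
Your proof is correct and follows essentially the same route the paper intends: the paper imports Lemma~\ref{Lemma:ParamTransf} from~\cite{aldana2019design} without reproving it, and the argument used there (and reused in the paper's proof of Theorem~\ref{Th:MainResult}) is precisely your time-reparametrization $t=\psi(\tau)+t_0$ with the chain-rule verification $\dot{x}=\frac{1}{T_c}\Phi(\psi^{-1}(t-t_0))^{-1}g(x)$, the identification $T(x_0,t_0)=\lim_{\tau\to\mathcal{T}(x_0)}\psi(\tau)\leq T_c$, and the extension by the zero solution past the settling time. Your closing observation about the gain singularity as $t-t_0\to T_c$ also matches the intended reading, namely that every trajectory settles at or before $t_0+T_c$, so the unbounded tail of the $\tau$-axis compressed into $[0,T_c)$ is never an obstruction.
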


\subsection{Proof of the main theorem}
\label{AppendProofMain}
\begin{proof}[Proof of Theorem~\ref{Th:MainResult}]
Let us consider the change of coordinates $z_i=(r\kappa(t-t_0))^{1-i}y_i$ ($i=1,\cdots,n$). Hence, in the new coordinate, the dynamic of the system is given by
\begin{multline}
\label{Eq:zi}
    \dot{z}_i=\kappa(t-t_0)\left(-(i-1)\kappa(t-t_0)^{-2}\dot{\kappa}(t-t_0)z_i \right.\\ \left.+ rk_if_i(z_1) + rz_{i+1}\right) 
\end{multline}
for $i=1,\cdots,n-1$, and
\begin{multline}
\label{Eq:zn}
    \dot{z}_n=\kappa(t-t_0)\left(-(n-1)\kappa(t-t_0)^{-2}\dot{\kappa}(t-t_0)z_n\right.\\ \left.+ rk_nf_n(z_1)+r\kappa(t-t_0)^{-n}\delta(t)\right) 
\end{multline}

Now, consider the bijective map $\varphi:[t_0,t_0 + \lim_{\tau\to\mathcal{T}(z_0)}\psi(\tau))\to \mathcal{I}'=[0,\mathcal{T}(z_0))$  defined by $\varphi^{-1}(\tau) = \psi(\tau) + t_0$, where
\begin{equation}
\psi(\tau)=\int_{0}^\tau\rho^{-1}(\xi)d\xi,  \ \ \tau\in \mathcal{I}',
\end{equation}
which, according to Lemma~\ref{Lemma:ParamTransf}, defines the parameter transformation $t=\varphi^{-1}(\tau)$. Notice that $t-t_0=\psi(\tau)$, one can get $$\frac{dt}{d\tau}=\frac{d(t-t_0)}{d\tau}=\rho^{-1}(\tau)=\rho^{-1}(\psi^{-1}(t-t_0))=\kappa^{-1}(t-t_0).$$
Hence, by the chain rule $\frac{dz}{d\tau}=\left. \frac{dz}{dt}\frac{dt}{d\tau}\right\rvert_{t=\psi(\tau)+t_0}$, one can derive
\begin{align}
\dot{\kappa}(\hat{t}) &= \frac{d}{d\hat{t}}\rho(\psi^{-1}(\hat{t}))=\rho'(\psi^{-1}(\hat{t}))\frac{d\psi^{-1}(\hat{t})}{d\hat{t}}\\
&= \rho'(\psi^{-1}(\hat{t}))\left[\psi'(\psi^{-1}(\hat{t}))\right]^{-1} \\
&= \rho'(\psi^{-1}(\hat{t}))\rho(\psi^{-1}(\hat{t})),
\end{align}
where $\hat{t} = t-t_0$. Hence, $\dot{\kappa}(\psi(\tau)) = \frac{d\rho(\tau)}{d\tau}\rho(\tau)$.
Thus, under such parameter transformation, the dynamic of~\eqref{Eq:zi}-\eqref{Eq:zn} can be written as
\begin{align}
    \frac{dz_i}{d\tau} &= -(i-1)\rho(\tau)^{-1}\frac{d\rho(\tau)}{d\tau}z_i+ rk_if_i(z_1) + rz_{i+1}
\intertext{for $i=1,\cdots,n-1$, and }
    \frac{dz_n}{d\tau}&=-(n-1)\rho(\tau)^{-1}\frac{d\rho(\tau)}{d\tau}z_n+ rk_nf_n(z_1)+(r\rho(\tau))^{-n}\hat{\delta}(\tau)
\end{align}
where $\hat{\delta}(\tau)=\delta(\psi(\tau)+t_0)$, which, can be written as~\eqref{Eq:TauSystem}.

Since system~\eqref{Eq:TauSystem} is asymptotically stable and has a settling time function $\mathcal{T}(z_0)$, then, due to Lemma~\ref{Lemma:ParamTransf} the settling time of~\eqref{Eq:PrescDiff} is given by $T(y_0,t_0)=\lim_{\tau\to\mathcal{T}(z_0)}\psi(\tau)=T_c\int_{0}^{\mathcal{T}(z_0)}\Phi(\xi)d\xi\leq T_c \int_{0}^{T_f}\Phi(\xi)d\xi \leq \eta T_c \leq T_c$. Thus, the settling time function of ~\eqref{Eq:PrescDiff} is upper bounded by $T_c$.


\end{proof}

\subsection{Proof of the Propositions}
\label{AppendixPropProofs}

\begin{proof}[Proof of Proposition~\ref{Prop:Main}]
Clearly, if $\mathcal{T}(z_0)=+\infty$, for all $z_0\in\mathbb{R}^n\setminus \{0\}$, then, $T(y_0,t_0)=\lim_{\tau\to\mathcal{T}(z_0)}\psi(\tau)=T_c\int_{0}^{\mathcal{T}(z_0)}\Phi(\xi)d\xi=T_c$, for all $y_0\in\mathbb{R}^n\setminus \{0\}$. Thus, \textit{item 1)} holds. Moreover, if $\mathcal{T}(z_0)$ is finite but radially unbounded, i.e. $\lim_{\|z_0\|\to\infty}\mathcal{T}(z_0)=\infty$ then
$T(y_0,t_0)=T_c\int_{0}^{\mathcal{T}(z_0)}\Phi(\xi)d\xi<T_c$. Therefore, $\sup_{(y_0,t_0) \in \mathbb{R}^n\times\mathbb{R}_+} T(y_0,t_0)= T_c\lim_{\|z_0\|\to\infty}\int_{0}^{\mathcal{T}(z_0)}\Phi(\xi)d\xi=T_c$. Thus, $T_c$ is the least \textit{UBST}. Hence, \textit{item 2)} holds. Finally, if the origin of~\eqref{Eq:PrescDiff} is fixed-time stable and $T_f$ is the \textit{UBST}, i.e. $\sup_{z_0 \in \mathbb{R}^n}\mathcal{T}(z_0)<T_f$, then there exists $\hat{T}_c<T_c$ such that $\sup_{(y_0,t_0) \in \mathbb{R}^n\times\mathbb{R}_+} T(y_0,t_0)< T_c\int_{0}^{T_f}\Phi(\xi)d\xi=\hat{T}_c<T_c$. Therefore, \textit{item 3)} holds.

\end{proof}

\begin{proof}[Proof of Proposition~\ref{Prop:Lineal}]
Note that the auxiliary system \eqref{Eq:TauSystem} can be expressed as 
$$
\frac{dz}{d\tau} = rAz - \rho^{-1}\frac{d\rho}{d\tau}Mz + (r\rho)^{-n}D\hat{\delta}.
$$
Let $V = z^TPz$ be a candidate Lyapunov function for \eqref{Eq:TauSystem}. Thus,
\begin{align}
&\frac{dV}{d\tau} = rz^T(A^TP+PA)z - 2\rho^{-1}\frac{d\rho}{d\tau}z^TPMz + 2(r\rho)^{-n}z^TPD\hat{\delta}\\&
\leq  -r\|z\|^2 + 2\rho^{-1}\frac{d\rho}{d\tau}\|Pz\|\|Mz\| + 2(r\rho)^{-n}\|Pz\||\hat{\delta}|\\&
\leq  -\|z\|\left(\left(r - 2\rho^{-1}\frac{d\rho}{d\tau}\lambda_{\max}(P)(n-1)\right)\|z\|-2L\frac{\lambda_{\max}(P)}{r^n\rho^{n}}\right)
\end{align}
since $\|Pz\|\leq\lambda_{\max}(P)\|z\|$ and $\|Mz\|\leq(n-1)\|z\|$. Note that if condition 1) is fulfilled, then the term $r - 2\rho^{-1}\frac{d\rho}{d\tau}\lambda_{\max}(P)(n-1) > 0$ for all $\tau\geq0$. Moreover, if condition 2) is satisfied, then there exist a positive constant $\tau^*$ at which $r - 2\rho^{-1}\frac{d\rho}{d\tau}\lambda_{\max}(P)(n-1) > 0$ for all $\tau\geq \tau^*$ and $r> 0$. Hence, $\frac{dV}{d\tau}\leq 0$ for $\tau\geq\tau^*$ and  $z$ outside the region  $\{z\in\mathbb{R}^n:\|z\|\leq\alpha_1(\tau)\}$ where 
$$
\alpha_1(\tau) = \frac{2L\lambda_{\max}(P)}{r^n\rho^n(r - 2\rho^{-1}\frac{d\rho}{d\tau}\lambda_{\max}(P)(n-1))}. 
$$
Note that $\lim_{\tau\to\infty}\alpha_1(\tau)=0$ and therefore \eqref{Eq:TauSystem} is asymptotically stable. 
Thus, the result follows from Theorem~\ref{Th:MainResult} \textit{item 1)}.
\end{proof}

\begin{proof}[Proof of Proposition~\ref{Prop:STFixed}]
Let the vector field $\mathcal{F}: \mathbb{R}^n \rightarrow \mathbb{R}^n$ defined as $\mathcal{F}(z) = F(z_1)+A_0z$. Consider the vector of weights $\mathbf{r}=[n,n-1,\ldots,1]^T$ and note that $\mathcal{F}(z)$ is $\mathbf{r}$-homogeneous of degree $d = -1$. Hence, there exists a function $V:\mathbb{R}^n\to\mathbb{R}$ satisfying all conditions of Theorem~\ref{Th:existenceLyapunov}. Let $S_r = \{z\in \mathbb{R}^n:\|z\|_\mathbf{r} = 1\}$ and note that the map $\Delta:(0,+\infty)\times S_r\to\mathbb{R}^n\setminus\{0\}$  defined by $\Delta(\lambda,y) = \Delta_\mathbf{r}(\lambda)y$ is surjective, where $\Delta_\mathbf{r}(\lambda)$ is given in Definition \ref{Def.Homogeneous}. Hence, there exists a value of $\lambda>0$ which maps $y\in S_r$ to any $z = \Delta_\mathbf{r}(\lambda)y\in\mathbb{R}^n\setminus\{0\}$. Note that $\frac{dz_i}{dy_i} = \lambda^{r_i}$ such that $\frac{\partial V}{\partial y_i} = \frac{\partial V}{\partial z_i}\lambda^{r_i}$ and $\frac{\partial V}{\partial y} = \frac{\partial V}{\partial z}\Delta_\mathbf{r}(\lambda)$. Therefore, the evolution of $V(z)$ along system \eqref{Eq:TauSystem} can be expressed in terms of $y$ as
\begin{align}
\frac{dV}{d\tau}=& \frac{\partial V}{\partial z}\left(r\mathcal{F}(z) -\rho(\tau)^{-1}\frac{d\rho(\tau)}{d\tau}Mz+(r\rho(\tau))^{-n}D\hat{\delta}(\tau)\right)\\
=&\frac{\partial V}{\partial y}\Delta_\mathbf{r}(\lambda)^{-1}\left(r\mathcal{F} (\Delta_\mathbf{r}(\lambda)y)-\rho(\tau)^{-1}\frac{d\rho(\tau)}{d\tau}M\Delta_\mathbf{r}(\lambda)y\right.\\
 &\left.+(r\rho(\tau))^{-n}D\hat{\delta}(\tau)\right)\\
=&\frac{\partial V}{\partial y}\Delta_\mathbf{r}(\lambda)^{-1}\left(r\lambda^{d}\Delta_\mathbf{r}(\lambda)\mathcal{F} (y)-\Delta_\mathbf{r}(\lambda)\rho(\tau)^{-1}\frac{d\rho(\tau)}{d\tau}My\right.\\
 &\left.+(r\rho(\tau))^{-n}D\hat{\delta}(\tau)\right)\\
=&\frac{\partial V}{\partial y}\left(r\lambda^{d}\mathcal{F} (y)-\rho(\tau)^{-1}\frac{d\rho(\tau)}{d\tau}My\right.\\
&\left.+\Delta_\mathbf{r}(\lambda)^{-1}(r\rho(\tau))^{-n}D\hat{\delta}(\tau)\right)\\
<&-c_3r\lambda^dV(y)^p + \rho(\tau)^{-1}\frac{d\rho(\tau)}{d\tau}\left\|\frac{\partial V}{\partial y}\right\|\|My\|\\
 &+(r\rho(\tau))^{-n}\left\|\frac{\partial V}{\partial y}\right\|\lambda^{-r_n}L\\
<&-c_3c_1r\lambda^d\|y\|_\mathbf{r}^{kp}+\rho(\tau)^{-1}\frac{d\rho(\tau)}{d\tau}b_1\\
 &+ (r\rho(\tau))^{-n}b_2
\end{align}
with $b_1 = \sup_{y\in S_r}\left\|\frac{\partial V}{\partial y}\right\|\|My\|$ and $b_2=\sup_{y\in S_r}\left\|\frac{\partial V}{\partial y}\right\|\lambda^{-r_n}L$. Note that $\|y\|_\mathbf{r}^{kp}=1$ and $\|z\| = \|\Delta_\mathbf{r}(\lambda)y\| > \lambda^{\tilde{r}}\|y\|\geq \lambda^{\tilde{r}}R$ where $\tilde{r} = \text{argmin}_{r_i} \lambda^{r_i}$ and $R = \min_{y\in S_r}\|y\|$. Hence, $\lambda > \|z\|^{1/\tilde{r}}R^{-1/\tilde{r}}$ and
\begin{align}
    \frac{dV}{d\tau}&<-c_3c_1rR^{-d/\tilde{r}}\|z\|^{d/\tilde{r}}+\rho(\tau)^{-1}\frac{d\rho(\tau)}{d\tau}b_1 + (r\rho(\tau))^{-n}b_2
\end{align}
Therefore, $\frac{dV}{d\tau} < 0$ for $z$ outside the region  $\{z\in\mathbb{R}^n:\|z\|\leq\alpha_2(\tau)\}$ where 
$$
\alpha_2(\tau) = \left(
-c_3c_1rR^{-d/\tilde{r}}\right)^{-\frac{\tilde{r}}{d}}\left(\rho(\tau)^{-1}\frac{d\rho(\tau)}{d\tau}b_1 + (r\rho(\tau))^{-n}b_2\right)^{\frac{\tilde{r}}{d}}
$$
Note that $\lim_{\tau\to\infty}\alpha_2(\tau)=0$ and therefore \eqref{Eq:TauSystem} is asymptotically stable. Let the change of coordinates $w_1 = z_1$, $w_2=z_2$ and $w_i=z_i-\sum_{j=1}^{i-2}\frac{d^{j-1}}{d\tau^{j-1}}\left(\rho(\tau)^{-1}\frac{d\rho(\tau)}{d\tau}z_{i-j}\right)(i-j-1)$ for $3\leq i\leq n$. Then,
\begin{align*}
\frac{dw_i}{d\tau}=&\frac{dz_i}{d\tau}-\sum_{j=1}^{i-2}\frac{d^{j}}{d\tau^{j}}\left(\rho(\tau)^{-1}\frac{d\rho(\tau)}{d\tau}z_{i-j}\right)(i-j-1)\\
=&\frac{dz_i}{d\tau} + (i-1)\rho(\tau)\frac{d\rho(\tau)}{d\tau}z_i \\
&- \sum_{j=0}^{i-2}\frac{d^{j}}{d\tau^{j}}\left(\rho(\tau)^{-1}\frac{d\rho(\tau)}{d\tau}z_{i-j}\right)(i-j-1)\\
=&\frac{dz_i}{d\tau} + (i-1)\rho(\tau)\frac{d\rho(\tau)}{d\tau}z_i \\
&- \sum_{j=1}^{i-1}\frac{d^{j-1}}{d\tau^{j-1}}\left(\rho(\tau)^{-1}\frac{d\rho(\tau)}{d\tau}z_{i-j+1}\right)(i-j)\\
=&\frac{dz_i}{d\tau} + (i-1)\rho(\tau)\frac{d\rho(\tau)}{d\tau}z_i + (w_{i+1}-z_{i+1})\\
=&k_i\sgn{w_1}^{\frac{n-i}{n}} + w_{i+1}
\end{align*}
for $i\neq n$ and $\frac{dw_n}{d\tau} = k_n\sign{w_1} + Q(w,\tau)$, where $Q(w,\tau) = \sum_{j=1}^{n-2}\frac{d^{j-1}}{d\tau^{j-1}}\left(\rho(\tau)^{-1}\frac{d\rho(\tau)}{d\tau}z_{n-j}\right)(n-j-1) + (\rho(\tau))^{-n}\hat{\delta}(\tau)$. This can be written in a compact form as $\frac{dw}{d\tau} = G(w) + DQ(z,t)$. Note that since $\lim_{\tau\to+\infty}\rho(\tau)^{-n}= 0$, and that $z$ converges to the origin asymptotically, there exists a constant $\tau^*$ such that $|Q(w,\tau)|\leq L,\ \forall \tau\geq \tau^*$. Therefore, by \cite{Levant2001}, the origin of $\frac{dw}{d\tau} = G(w) + DQ(z,t)$ is finite time stable. Thus, the origin of \eqref{Eq:TauSystem} is finite time stable. Thus, the result follows from Theorem~\ref{Th:MainResult} item 2).
\end{proof}

\begin{proof}[Proof of Proposition~\ref{Basin}]
Notice that~\eqref{Eq:TauSystem} becomes 
\begin{equation}
    \frac{dz}{d\tau}=F(z_1)+A_0z-\alpha Mz+D\hat{\delta}(\tau).
\end{equation}
Consider the coordinate change $z=\mathcal{Q}\hat{z}$, thus the dynamic of $\hat{z}$ is given by
$
    \frac{d\hat{z}}{d\tau}=\mathcal{Q}^{-1}F(Q\hat{z}_1)+\mathcal{Q}^{-1}(A_0-\alpha M)\mathcal{Q}\hat{z}+\mathcal{Q}^{-1}D\hat{\delta}(\tau)
$,
i.e.
\begin{align}
\frac{d\hat{z}_1}{d\tau}&=k_1g_1(\hat{z}_1)+\hat{z}_2\\
\intertext{for $i=1,\ldots,n-1$, and}
\frac{d\hat{z}_n}{d\tau}&=k_ng_n(\hat{z}_1)+\hat{\delta}(\tau)
\end{align}
Thus, system~\eqref{Eq:TauSystem} is asymptotically stable with settling time function $\mathcal{T}(z_0)$ satisfying $\mathcal{T}(z_0)\leq T_{f}$ for all $z_0\in\mathbb{R}^n$. 
Moreover, according to Lemma~\ref{Lemma:ParamTransf}, an \textit{UBST} of~\eqref{Eq:PrescDiff} is given by $\lim_{\tau\to T_{\max}^*}\psi(\tau)=\eta T_c$. It follows from Corollary~\ref{Cor:BoundedGain} that, if there is a known $T_{\max}^*<+\infty$, such that $\mathcal{T}(z_0)\leq T_{\max}^*$, then for all $t\in[t_0,t_0+\eta T_c]$, $\kappa(t-t_0)$ is bounded. 
\end{proof}

\subsection{Auxiliary Results}
\label{AppendixAuxResults}

\begin{lemma}
\label{lemma:exp_stability}
If the origin of system~\eqref{Eq:TauSystem} is exponentially stable with decay rate $c$ and if there exists a constant $\tau^*>0$ which verifies
\begin{equation}
    c>(n-1)\frac{\log(\rho(\tau))}{\tau}, \qquad  \forall \tau\geq\tau^*,
    \label{eq:exp_stability}
\end{equation}
then the solution of~\eqref{Eq:TauSystem} satisfies~\eqref{Eq:CondTrans}. 
\end{lemma}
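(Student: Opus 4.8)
The plan is to convert the stated exponential decay of $z(\tau)$ into the weighted limits \eqref{Eq:CondTrans} by showing that the factor $e^{-c\tau}$ dominates each power $\rho(\tau)^{i-1}$. First I would record the meaning of the hypothesis: exponential stability with decay rate $c$ provides a constant $K=K(z_0)>0$ such that $\|z(\tau;z_0,0,\hat\delta_{[0,\infty)})\|\le K e^{-c\tau}$ for all $\tau\ge 0$. Here the disturbance enters \eqref{Eq:TauSystem} only through the vanishing term $(r\rho(\tau))^{-n}D\hat\delta(\tau)$; since $|\hat\delta|\le L$ and $\rho(\tau)\to\infty$, a variation-of-constants representation shows the forced response is the transition operator convolved with an integrable forcing, so the rate-$c$ bound survives the presence of $\hat\delta$.

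Given this bound, for each $i\in\{1,\dots,n\}$ I would estimate componentwise
\[
0 \le \big|\rho(\tau)^{i-1} z_i(\tau)\big| \le \rho(\tau)^{i-1}\|z(\tau)\| \le K\exp\!\big((i-1)\log\rho(\tau) - c\tau\big),
\]
so that \eqref{Eq:CondTrans} reduces to showing the exponent tends to $-\infty$. Writing it as $\tau\big((i-1)\tfrac{\log\rho(\tau)}{\tau} - c\big)$ and using $0\le i-1\le n-1$ together with \eqref{eq:exp_stability}, the bracket is bounded above by $(n-1)\tfrac{\log\rho(\tau)}{\tau} - c$, which is strictly negative for $\tau\ge\tau^*$. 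Since this negative factor multiplies $\tau\to\infty$, the exponent diverges to $-\infty$ and each weighted component vanishes, establishing \eqref{Eq:CondTrans}.

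The step I expect to be the main obstacle is making the divergence of the exponent rigorous rather than merely asserting its negativity: condition \eqref{eq:exp_stability} guarantees $(i-1)\log\rho(\tau)-c\tau<0$ but not, by itself, that it escapes to $-\infty$. To close this I would invoke the asymptotic gap implicit in \eqref{eq:exp_stability}, namely $\limsup_{\tau\to\infty}(n-1)\tfrac{\log\rho(\tau)}{\tau}<c$, which holds for the admissible choices of $\rho$ (for $\rho(\tau)\propto e^{\tau}$ the ratio tends to $1$, requiring $c>n-1$, while for polynomial $\rho$ the ratio tends to $0$). The borderline index is $i=n$, where $i-1=n-1$ saturates the bound; for every $i<n$ one even gains a spare factor $\exp\!\big(-c\tau\tfrac{n-i}{n-1}\big)$ after absorbing $\rho(\tau)^{n-1}\le e^{c\tau}$, so decay there is immediate. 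A secondary care point is the disturbance-to-origin transfer noted above, which I would dispatch once via variation of constants rather than repeating it for each component.
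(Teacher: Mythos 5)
Your proof is correct and follows essentially the same route as the paper's: exponential stability gives $\|z(\tau)\|\le k\|z_0\|e^{-c\tau}$, one bounds $\rho(\tau)^{i-1}|z_i(\tau)|\le\rho(\tau)^{n-1}\|z(\tau)\|\le k\|z_0\|\exp\bigl((n-1)\log\rho(\tau)-c\tau\bigr)$, and \eqref{eq:exp_stability} is used to send the exponent to $-\infty$, yielding \eqref{Eq:CondTrans}. The only differences are that you make explicit two points the paper passes over silently: that strict negativity of the exponent for $\tau\ge\tau^*$ does not by itself force divergence to $-\infty$ (the paper merely asserts the factor ``decreases and approaches the origin''), which your $\limsup$-gap reading repairs, and that the rate-$c$ bound must hold for the disturbed system~\eqref{Eq:TauSystem}, which the paper simply takes as part of the hypothesis rather than deriving via variation of constants.
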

\begin{proof}
Exponential stability of ~\eqref{Eq:TauSystem} implies that $\|z\|\leq k\|z_0\|\exp(-c\tau)$ for some $k>0$. Moreover, $\|z\|\rho(\tau)^{n-1}\leq k\|z_0\|\exp(-c\tau)\rho(\tau)^{n-1} = k\|z_0\|\exp(-c\tau + (n-1)\log(\rho(\tau)))$. If \eqref{eq:exp_stability} is verified, then $\exp(-c\tau + (n-1)\log(\rho(\tau))$ decreases for $\tau\geq\tau^*$ and approaches the origin as $\tau\to+\infty$. Therefore, $|z_i\rho(\tau)^{i-1}|\leq|z_i\rho(\tau)^{n-1}|\leq\|z\|\rho(\tau)^{n-1}\to 0$ as $\tau\to+\infty$.
\end{proof}


\end{document}